\theoremstyle{ams}
\newtheorem{theorem}{Theorem}[section]
\newtheorem{proposition}[theorem]{Proposition}
\newtheorem{lemma}[theorem]{Lemma}
\newtheorem{corollary}[theorem]{Corollary}
\theoremstyle{definition}
\newtheorem{definition}[theorem]{Definition}
\newtheorem{remark}[theorem]{Remark}
\newtheorem{example}[theorem]{Example}
\numberwithin{table}{section}
\numberwithin{figure}{section}
\numberwithin{equation}{section} %\numberwithin{example}{section}
\DeclareMathOperator{\ev}{ev}
\DeclareMathOperator{\id}{id}
\DeclareMathOperator{\D}{D}
\DeclareMathOperator{\GL}{GL}
\DeclareMathOperator{\im}{im}
\DeclareMathOperator{\res}{res}
\DeclareMathOperator{\iso}{iso}
\DeclareMathOperator{\fiso}{fiso}
\DeclareMathOperator{\End}{end}
\DeclareMathOperator{\gl}{gl}
\DeclareMathOperator{\ext}{ext}
\DeclareMathOperator{\Id}{Id}
\newcommand{\Cl}{\mathrm{Cl}}
\newcommand{\cl}{\mathrm{cl}}
\newcommand{\tetra}{\mathrm{T}}
\newcommand{\field}[1]{\mathbb{#1}}
\newcommand{\C}{\field{C}}
\newcommand{\Z}{\field{Z}}
\newcommand{\N}{\field{N}}
\begin{document}

\title{Equivariant pointwise clutching maps}

\author[M. K. Kim]{Min Kyu Kim}
\address{Department of Mathematics Education,
Gyeongin National University of Education, 45
Gyodae-Gil, Gyeyang-gu, Incheon, 407-753,
Republic of Korea} \email{mkkim@kias.re.kr}

%\thanks{* the corresponding author}

%\date{\today}
\maketitle

%\subjclass[2000]{Primary 57S25, 55P91 ; Secondary 20C99}
%
%\keywords{equivariant vector bundle, equivariant homotopy, representation}

\begin{abstract}
In the paper, we introduce the terminology
\textit{equivariant pointwise clutching map}. By
using this, we give details on how to glue an
equivariant vector bundle over a finite set so as
to obtain a new Lie group representation such
that the quotient map from the bundle to the
representation is equivariant. Then, we
investigate the topology of the set of all
equivariant pointwise clutching maps with respect
to an equivariant vector bundle over a finite
set. Results of the paper play a key role in
classifying equivariant vector bundles over
two-surfaces in other papers.
\end{abstract}

\maketitle

\section{Introduction} \label{section: introduction}

Though equivariant vector bundle is a usual
object in topology and geometry, there are only a
few classification results, for example in the
following extreme cases:
\begin{itemize}
\item a group action on a base space is free, see
\cite[p. 36]{At}, \cite[p. 132]{S},
\item a group action on a base space is transitive,
see \cite[p. 130]{S}, \cite[Proposition II.3.2]{B},
\item a base space is $S^1,$ see \cite{CKMS}.
\end{itemize}
So, it would be meaningful even if we develop a
systematic approach to classification of
equivariant vector bundles only over
low-dimensional manifolds. In oncoming papers,
the author will do so through a slight
generalization of clutching construction, which
he calls equivariant clutching construction, see
\cite[p. 23$\sim$24]{At} for clutching
construction. And in this paper, we will
investigate its technical essence (or its
pointwise version) by using the terminology
equivariant pointwise clutching map. So, we need
simply explain the construction as a motivation.
Before we give an explanation by an example, we
introduce a useful notation. Let $G$ be a compact
Lie group acting continuously on a topological
space $X,$ and let $E$ be an equivariant
topological complex vector bundle over $X.$ For a
subset $A$ in $X,$ let $G_A$ be the maximal
subgroup of $G$ preserving $A.$ Then, $E|_A$ is
preserved by the group action restricted to
$G_A,$ where $E|_A$ is the pull-back bundle of
$E$ by the inclusion from $A$ to $X.$ Denote the
bundle $E|_A$ equipped with the $G_A$-vector
bundle structure by $E_A.$ For a one point subset
$\{ x \} \subset X,$ we denote $E_{\{ x \}}$
simply by $E_x,$ which is just a
$G_x$-representation.

\begin{figure}[ht]
\begin{center}
\begin{pspicture}(-4,0)(5,3) \footnotesize

\pspolygon[fillstyle=solid,fillcolor=lightgray](1.5,
1)(3.25, 0.75)(4,1.25)(2.75,2.5)(1.5, 1)
\psline[linestyle=dotted](1.5, 1)(4,1.25)
\psline(3.25, 0.75)(2.75,2.5)

\pspolygon[fillstyle=solid,fillcolor=lightgray](-1.5,0.75)(-0.75,1.25)(-3.25,1)(-1.5,0.75)
\pspolygon[fillstyle=solid,fillcolor=lightgray](-0.75,1.5)(-2,2.75)(-3.25,1.25)(-0.75,1.5)
\pspolygon[fillstyle=solid,fillcolor=lightgray](-1,
0.75)(-0.25,1.25)(-1.5,2.5)(-1,0.75)
\pspolygon[fillstyle=solid,fillcolor=lightgray](-2,0.75)(-2.5,2.5)(-3.75,1)(-2,0.75)

\psline[arrowsize=5pt]{->}(0.25,1.75)(1.25,1.75)

\uput[u](0.75,1.75){$\pi$} \uput[u](3,0){$X$}
\uput[u](-1.75,0){$\overline{X}$}

\end{pspicture}
\end{center}
\caption{ \label{figure: tetrahedron} The
disjoint union of four faces of a regular
tetrahedron}
\end{figure}
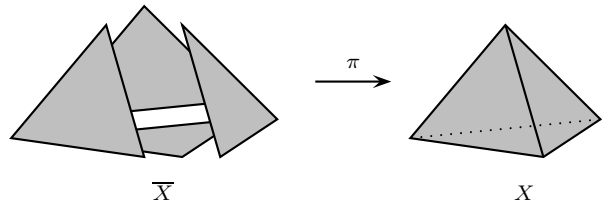

\begin{example} \label{example: tetrahedron}
Denote by $\tetra$ the order 24 full isometry
group of a regular tetrahedron $X.$ Then,
$\tetra$ acts naturally on $X.$ In addition to
the regular tetrahedron, we also consider the
disjoint union $\overline{X}$ of four faces of
$X,$ and the quotient map $\pi : \overline{X}
\rightarrow X$ which is the identity map whenever
restricted to each face, see Figure \ref{figure:
tetrahedron}. Then, the $\tetra$-action on $X$ is
lifted to $\overline{X},$ i.e. $\overline{X}$ is
equipped with a unique group action such that
$\pi$ is equivariant. Here, $X$ can be considered
to be obtained from $\overline{X}$ by gluing such
that the quotient map $\pi$ is equivariant. We
can say similar things to an equivariant
topological complex vector bundle $E$ over $X.$
The $\tetra$-action on $E$ is lifted to the
pull-back bundle $\pi^* E$ of $E$ by $\pi.$ And,
$E$ can be considered to be obtained from $\pi^*
E$ by gluing such that the quotient map from
$\pi^* E$ to $E$ is equivariant. The author calls
this way of constructing $E$ \textit{equivariant
clutching construction}. Here, it is conceivable
that we could obtain various equivariant vector
bundles over $X$ by different ways of gluing.
From this viewpoint, we only have to investigate
equivariant vector bundles over $\overline{X}$
and their possible ways of gluing so as to
classify equivariant vector bundles over $X.$
However since equivariant vector bundles over
$\overline{X}$ are easily understood, we only
have to focus on possible ways of gluing. In
doing so, the first thing to do is fiberwise
gluing. To explain this, assume for the moment
that we have already succeeded in obtaining a new
equivariant vector bundle $K$ over $X$ from an
equivariant vector bundle $J$ over $\overline{X}$
by gluing such that the quotient map from $J$ to
$K$ is equivariant. Then, this implies that for
each $x$ in some edge of $X,$ we have obtained a
new $G_x$-representation $K_x$ from the
$G_x$-vector bundle $J_{\pi^{-1}(x)}$ by gluing
such that the restricted quotient map from
$J_{\pi^{-1}(x)}$ to $K_x$ is equivariant. We
might call such a gluing of an equivariant vector
bundle over a finite set \textit{fiberwise
gluing} temporarily.  \qed
\end{example}

In the paper, we are concerned about fiberwise
gluing. Our investigation of it is divided into
two steps in a general setting as follows: given
a compact Lie group $G$ acting continuously on a
finite set $\bar{\mathbf{x}}$ and an equivariant
topological complex vector bundle $F$ over
$\bar{\mathbf{x}},$ we investigate (1) how to
glue actually $F$ so as to obtain a new
$G$-representation $w$ such that the quotient map
from $F$ to $w$ is equivariant, and then (2) the
topology of the set of all possible ways to glue
$F.$

First, we deal with (1). To glue $F,$ it is
natural to consider a map in
\begin{equation} \label{equation: product}
\prod_{(\bar{x}, \bar{x}^\prime) \in
\bar{\mathbf{x}} \times \bar{\mathbf{x}}} ~ \iso
\big( F_{\bar{x}}, ~ F_{\bar{x}^\prime} \big),
\end{equation}
i.e. a map $\psi$ contained therein is defined on
$\bar{\mathbf{x}} \times \bar{\mathbf{x}}$ and
the image $\psi (\bar{x}, \bar{x}^\prime)$ of
$(\bar{x}, \bar{x}^\prime) \in \bar{\mathbf{x}}
\times \bar{\mathbf{x}}$ is contained in $\iso
\big( F_{\bar{x}}, ~ F_{\bar{x}^\prime} \big),$
where the notation $\iso(\cdot, \cdot)$ denotes
the set of inequivariant(i.e. not considering a
group action) isomorphisms between two vector
spaces. Such a map $\psi$ is called a
\textit{pointwise clutching map} with respect to
$F$ if it satisfies the following:
\begin{enumerate}
\item[(a)] reflexivity : $~\psi(\bar{x}, \bar{x}) = \id_{F_{\bar{x}}},$
\item[(b)] symmetry : $~\psi(\bar{x}, \bar{x}^\prime) =
\psi(\bar{x}^\prime, \bar{x})^{-1},$
\item[(c)] transitivity : $~\psi(\bar{x}, \bar{x}^{\prime \prime}) =
\psi(\bar{x}^\prime, \bar{x}^{\prime \prime})
\psi(\bar{x}, \bar{x}^\prime)$
\end{enumerate}
for any $\bar{x}, ~ \bar{x}^\prime, ~
\bar{x}^{\prime \prime} \in \bar{\mathbf{x}},$
where $\id_V$ for a vector space $V$ is the
identity map on $V.$ By using a pointwise
clutching map $\psi,$ we would glue $F$ by
defining an equivalence relation $\sim$ on $F$ as
follows:
\begin{equation*}
\bar{u} \sim \bar{u}^\prime ~ \Longleftrightarrow
~ \psi(\bar{x}, \bar{x}^\prime) \bar{u} =
\bar{u}^\prime \qquad \text{for any } \bar{x},~
\bar{x}^\prime \in \bar{\mathbf{x}} \text{ and }
\bar{u} \in F_{\bar{x}}, ~ \bar{u}^\prime \in
F_{\bar{x}^\prime},
\end{equation*}
i.e. $\bar{u}$ and $\bar{u}^\prime$ are glued if
and only if $\bar{u} \sim \bar{u}^\prime.$ Denote
by $F/\psi$ the quotient space of $F$ by the
equivalence relation, and let $p_\psi : F
\rightarrow F/\psi$ be the quotient map. Since
$\psi$ is isomorphism-valued, the quotient space
$F/\psi$ inherits a unique vector space structure
from $F$ such that $p_\psi$ is a
\textit{fiberwise isomorphism}, i.e. an
inequivariant isomorphism between vector spaces
whenever restricted to each fiber. So, we have
obtained a vector space $F/\psi$ through gluing
$F$ by using a pointwise clutching map $\psi.$ To
equip the vector space $F/\psi$ with an
additional $G$-representation structure to match
our goal (1), the pointwise clutching map $\psi$
should satisfy an additional equivariance
condition.

We introduce a $G$-action on the set of pointwise
clutching maps with respect to $F.$ For a
pointwise clutching map $\psi$ with respect to
$F,$ if we define a new map $g \cdot \psi$ for $g
\in G$ in the product (\ref{equation: product})
as follows:
\begin{equation*}
(g \cdot \psi)(\bar{x}, \bar{x}^\prime) \bar{u} ~
:= ~ g \psi( g^{-1} \bar{x}, g^{-1}
\bar{x}^\prime ) g^{-1} \bar{u} \qquad \text{for
} \bar{x}, ~ \bar{x}^\prime \in \bar{\mathbf{x}}
\text{ and } \bar{u} \in F_{\bar{x}},
\end{equation*}
then $g \cdot \psi$ is also a pointwise clutching
map with respect to $F.$ If $\psi$ is fixed by
the action, i.e. $g \cdot \psi = \psi$ for each
$g \in G,$ then $\psi$ is called
\textit{equivariant}. In Lemma \ref{lemma:
equivalent condition for psi}, it is proved that
a pointwise clutching map $\psi$ with respect to
$F$ is equivariant if and only if the vector
space $F / \psi$ can be equipped with a unique
$G$-representation structure such that $p_\psi$
is equivariant. The unique $G$-representation
structure is called a \textit{glued
representation}, and denoted simply by the same
notation $F / \psi.$ In summary, we glue $F$ by
using an equivariant pointwise clutching map
$\psi$ so as to obtain the glued representation
$F/\psi$ so that the quotient map $p_\psi$ is
equivariant. This is our answer to (1).

Next, we deal with (2). Let $\Psi_F$ be the set
of all equivariant pointwise clutching maps with
respect to $F.$ We topologize $\Psi_F$ with the
subspace topology inherited from the product
topology on the product space (\ref{equation:
product}). In \cite{K}, the author classified
equivariant vector bundles over two-sphere
through equivariant clutching construction, and
from this he guesses affirmatively that a similar
result holds for other two-surfaces. Technical
difficulty therein is to calculate the zeroth
homotopy group of $\Psi_F$ and the first homotopy
group of each path-component of $\Psi_F$ for
suitable bundles $F$'s in our notations,
especially when the $G$-action on
$\bar{\mathbf{x}}$ is transitive. So in this
paper, we will restrict our interest to $\pi_0$
and $\pi_1$ of $\Psi_F.$ But, if readers are not
satisfied with two-surfaces and want to classify
equivariant vector bundles over higher
dimensional manifolds, then they should calculate
higher homotopy groups of $\Psi_F$ for suitable
$F$'s. Though we do not indulge into them, we
provide some needed tools, see Theorem
\ref{theorem: homeo type of Psi} and Section
\ref{section: fundamental group}.

Now, we introduce a terminology and two maps to
state a result on $\pi_0 ( \Psi_F ).$ A complex
$G$-representation $w$ is called a
\textit{representation extension} of $F$ if
$\res_{G_{\bar{x}}}^G w \cong F_{\bar{x}}$ for
each $\bar{x} \in \bar{\mathbf{x}}.$ And, we
denote by $\ext F$ the set of $G$-isomorphism
classes of all representation extensions of $F.$
For example, the glued representation $F/\psi$
for $\psi \in \Psi_F$ is contained in $\ext F$
because $p_\psi$ is an equivariant fiberwise
isomorphism. So, we can define the following
maps:
\begin{alignat*}{2}
\gl &: \Psi_F \longrightarrow \ext F,
& \qquad \psi \mapsto F/\psi & \qquad \text{and} \\
\pi_0(\gl) &: \pi_0(\Psi_F) \longrightarrow \ext
F, & \qquad [\psi] \mapsto F/\psi, &
\end{alignat*}
where the notation $\gl$ is the first two letters
of glued representation and the second map is
well-defined, see arguments below Lemma
\ref{lemma: become a representation} for detail.

\begin{theorem}
\label{theorem: bijectivity with extensions}
Let a compact Lie group $G$ act continuously on a
finite set $\bar{\mathbf{x}},$ and let $F$ be an
equivariant topological complex vector bundle
over $\bar{\mathbf{x}}.$ Then, the map
$\pi_0(\gl)$ is bijective. Especially, $\Psi_F$
is nonempty if and only if $\ext F$ is nonempty.
\end{theorem}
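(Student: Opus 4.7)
The plan is to prove surjectivity and injectivity of $\pi_0(\gl)$ separately, both by exploiting the observation that an element of $\Psi_F$ is essentially equivalent data to a $G$-equivariant family of linear isomorphisms $(\phi_{\bar{x}} : F_{\bar{x}} \to w)_{\bar{x} \in \bar{\mathbf{x}}}$ into a single representation $w$, related by the formula $\psi(\bar{x}, \bar{x}') = \phi_{\bar{x}'}^{-1} \circ \phi_{\bar{x}}$, which yields $F/\psi \cong w$ as $G$-representations.

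For surjectivity, given $w \in \ext F$, I would pick representatives $\bar{x}_1, \ldots, \bar{x}_k$ of the $G$-orbits in $\bar{\mathbf{x}}$, use the defining property of $\ext F$ to choose a $G_{\bar{x}_i}$-equivariant isomorphism $\phi_{\bar{x}_i} : F_{\bar{x}_i} \to w$ for each $i$, and propagate to each whole orbit by $\phi_{g \bar{x}_i}(u) := g \, \phi_{\bar{x}_i}( g^{-1} u)$ (well-defined precisely because of $G_{\bar{x}_i}$-equivariance of $\phi_{\bar{x}_i}$). The associated $\psi$ then lies in $\Psi_F$ and satisfies $F/\psi \cong w$, proving surjectivity.

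For injectivity, suppose $F/\psi_0 \cong F/\psi_1$; after fixing an isomorphism one may identify both glued representations with a single $w$, so that the two quotient maps $p_{\psi_j}$ produce $G$-equivariant families $(\phi_{\bar{x}}^0)$ and $(\phi_{\bar{x}}^1)$ recovering $\psi_0$ and $\psi_1$ via the formula above. The idea is to interpolate these two families by a continuous path of equivariant families and take the associated path of clutching maps. On each orbit representative $\bar{x}_i$, the difference $\phi_{\bar{x}_i}^1 \circ (\phi_{\bar{x}_i}^0)^{-1}$ lies in the group $\GL^{G_{\bar{x}_i}}(w)$ of $G_{\bar{x}_i}$-equivariant linear automorphisms of $w$; by Schur's lemma this group is a product of complex general linear groups, hence path-connected. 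Picking a path $a_{\bar{x}_i}(t)$ from $\id$ to this element on each representative, setting $\phi_{\bar{x}_i}^t := a_{\bar{x}_i}(t) \circ \phi_{\bar{x}_i}^0$, and spreading equivariantly around the orbit as in the surjectivity step yields a continuous family $\psi_t := (\phi_{\bar{x}'}^t)^{-1} \circ \phi_{\bar{x}}^t$ in $\Psi_F$ joining $\psi_0$ to $\psi_1$.

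The main obstacle is the path-connectedness of $\GL^{H}(w)$ for each relevant isotropy subgroup $H \leq G$; this is where the complex coefficients are essential (the analogous real statement would fail because of the two components of $\GL_n(\R)$). Everything else—well-definedness of the orbit-wise extensions, verification of the axioms (a)--(c) and the equivariance condition for each $\psi_t$, and continuity of the resulting path—is a routine calculation from the defining formula $\psi = \phi_{\bar{x}'}^{-1} \phi_{\bar{x}}$ together with the equivariance $\phi_{g\bar{x}}(gu) = g \phi_{\bar{x}}(u)$.
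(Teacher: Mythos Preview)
Your proposal is correct and follows essentially the same approach as the paper: the paper formalizes your ``equivariant family $(\phi_{\bar{x}})$'' as an element of $\fiso_G(F,w)$, your formula $\psi(\bar{x},\bar{x}') = \phi_{\bar{x}'}^{-1}\circ\phi_{\bar{x}}$ is precisely the map $\Cl$, your orbit-representative construction is Proposition~\ref{proposition: homeo type of fiso}, and your use of Schur's lemma plus path-connectedness of $\GL(n,\C)$ is Corollary~\ref{corollary: path-connected}. The only difference is packaging: the paper builds the homeomorphism $\Psi_F(w)\cong \fiso_G(F,w)/\iso_G(w)$ (which it reuses for Theorem~\ref{theorem: homeo type of Psi}), whereas you go straight for the path between $\psi_0$ and $\psi_1$ without naming the intermediate spaces.
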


In the next theorem, we express each
path-component of $\Psi_F$ as the quotient space
of a Lie group by a closed subgroup, and this
enables us to calculate homotopy groups of each
path-component of $\Psi_F.$ For $w \in \ext F,$
denote by $\Psi_F (w)$ the preimage $\gl^{-1}
(w).$ Then, each $\Psi_F (w)$ is a nonempty
path-component of $\Psi_F$ by Proposition
\ref{proposition: cl is homeomorphic}.

\begin{theorem}
\label{theorem: homeo type of Psi} Under the
assumption of Theorem \ref{theorem: bijectivity
with extensions}, let $\bar{\mathbf{s}} \subset
\bar{\mathbf{x}}$ be a subset containing exactly
one element in each orbit of $\bar{\mathbf{x}}.$
Then, the path-component $\Psi_F (w)$ for each $w
\in \ext F$ is homeomorphic to the quotient space
\[
\Bigg( \prod_{{\bar{s}} \in \bar{\mathbf{s}}} ~
\iso_{G_{{\bar{s}}}} ( w ) \Bigg) \Bigg/ \iso_G (
w ),
\]
where $\iso_K (w)$ for a closed subgroup $K
\subset G$ is the group of all $K$-isomorphisms
of $w$ and the group $\iso_G (w)$ is diagonally
imbedded into the product group.
\end{theorem}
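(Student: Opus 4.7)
The plan is to construct a continuous surjection
\[
\Phi : \prod_{\bar{s}\in\bar{\mathbf{s}}} \iso_{G_{\bar{s}}}\bigl(F_{\bar{s}},w\bigr) \longrightarrow \Psi_F(w)
\]
whose fibers are precisely the orbits of the diagonal post-composition action of $\iso_G(w)$, and then to upgrade the induced continuous bijection on the quotient to a homeomorphism. Picking a base $G_{\bar{s}}$-isomorphism $\eta_{\bar{s}}\colon F_{\bar{s}}\to w$ for each $\bar{s}$ (which exists because $w\in\ext F$) identifies $\iso_{G_{\bar{s}}}(F_{\bar{s}},w)$ with $\iso_{G_{\bar{s}}}(w)$ via $B\mapsto B\eta_{\bar{s}}$, converting the $\iso_G(w)$-action into left multiplication and producing the target quotient stated in the theorem.

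To define $\Phi$, given $(\phi_{\bar{s}})$, I extend to $\phi_{\bar{x}}\colon F_{\bar{x}}\to w$ at each $\bar{x}=g\bar{s}$ by $\phi_{\bar{x}}(u) := g\phi_{\bar{s}}(g^{-1}u)$; the $G_{\bar{s}}$-equivariance of $\phi_{\bar{s}}$ makes this independent of the choice of $g$, and by construction one obtains the intertwining identity $g\phi_{\bar{x}} = \phi_{g\bar{x}}g$ for all $g\in G$. Setting $\psi(\bar{x},\bar{x}') := \phi_{\bar{x}'}^{-1}\phi_{\bar{x}}$ yields a pointwise clutching map whose equivariance follows from the intertwining identity, while the glued map $\tilde\phi\colon F\to w$ with $\tilde\phi|_{F_{\bar{x}}} = \phi_{\bar{x}}$ is $G$-equivariant and constant on $\psi$-equivalence classes, hence descends to a $G$-isomorphism $F/\psi\xrightarrow{\sim} w$. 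Thus $\Phi$ lands in $\Psi_F(w)$, and continuity is manifest from the formula. The diagonal action $A\cdot(\phi_{\bar{s}}) := (A\phi_{\bar{s}})$ of $\iso_G(w)$ replaces each $\phi_{\bar{x}}$ by $A\phi_{\bar{x}}$ thanks to the $G$-equivariance of $A$, so leaves $\psi$ unchanged; conversely, if $\Phi((\phi_{\bar{s}})) = \Phi((\phi'_{\bar{s}}))$, the relation $\phi_{\bar{x}'}^{-1}\phi_{\bar{x}} = (\phi'_{\bar{x}'})^{-1}\phi'_{\bar{x}}$ forces $A_{\bar{x}} := \phi_{\bar{x}}(\phi'_{\bar{x}})^{-1}\colon w\to w$ to be independent of $\bar{x}$, and the intertwining identity ensures this common value $A$ lies in $\iso_G(w)$; hence the fibers of $\Phi$ are exactly the $\iso_G(w)$-orbits. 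Surjectivity follows from $\gl(\psi)=w$: any $G$-iso $\alpha\colon F/\psi\xrightarrow{\sim} w$ yields $\phi_{\bar{s}} := \alpha\circ p_\psi|_{F_{\bar{s}}}$ with $\Phi((\phi_{\bar{s}})) = \psi$.

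The main remaining obstacle is upgrading the resulting continuous bijection $\bar\Phi$ on the quotient to a homeomorphism. My plan is to observe that $\prod_{\bar{s}}\iso_{G_{\bar{s}}}(F_{\bar{s}},w)$ is an open subset of a finite-dimensional complex vector space, hence a smooth manifold, while $\iso_G(w)$ is a Lie group (open in the finite product of matrix algebras $\End_G(w)$ furnished by Schur's lemma) acting smoothly, freely, and properly by post-composition. The quotient map is thus a principal $\iso_G(w)$-bundle and admits continuous local sections; pushing forward any such section under $\Phi$ provides a continuous local inverse of $\bar\Phi$ near an arbitrary point, completing the proof. Alternatively, a dimension count together with invariance of domain yields the homeomorphism directly from the continuous bijection between two manifolds of equal dimension.
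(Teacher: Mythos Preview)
Your construction of $\Phi$ and the verification that it is continuous, surjective, and has fibers equal to the diagonal $\iso_G(w)$-orbits is correct and matches the paper's approach (which packages the same content as the homeomorphism $\fiso_G(F,w)\cong\prod_{\bar s}\iso_{G_{\bar s}}(F_{\bar s},w)$ together with the map $\cl:\fiso_G(F,w)/\iso_G(w)\to\Psi_F(w)$).

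The gap is in your argument that the induced continuous bijection $\bar\Phi$ is a homeomorphism. Write $T=\prod_{\bar s}\iso_{G_{\bar s}}(F_{\bar s},w)$, $Q=T/\iso_G(w)$, and $q:T\to Q$ for the quotient map, so $\Phi=\bar\Phi\circ q$. A local section $s:V\to T$ of $q$ satisfies $q\circ s=\id_V$, hence $\Phi\circ s=\bar\Phi\circ q\circ s=\bar\Phi|_V$. Thus ``pushing $s$ forward under $\Phi$'' simply reproduces $\bar\Phi$ on $V$; it does not produce a map from an open subset of $\Psi_F(w)$ back to $Q$, and so it is not a local inverse of $\bar\Phi$. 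The existence of local sections of $q$ only tells you that $Q$ is a manifold; it says nothing about continuity of $\bar\Phi^{-1}$. Your alternative via invariance of domain requires knowing in advance that $\Psi_F(w)$ is itself a topological manifold of the same dimension as $Q$, which you have not established and which is essentially what is being proved.

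The paper handles exactly this point by constructing an explicit continuous local inverse. Fixing $\psi_0\in\Psi_F(w)$ and writing $\rho_\psi:G\to\iso(F_{\bar x_0})$ for the homomorphism underlying $F/\psi$, one sets
\[
\boldsymbol{\alpha}(\psi)=\int_G \rho_{\psi_0}(g)\,\rho_\psi(g)^{-1}\,dg,
\]
which lies in $\End_G(F/\psi,\,w)$ for all $\psi$ and equals a nonzero scalar at $\psi_0$, hence is invertible on a neighborhood $U$ of $\psi_0$. Then $\psi\mapsto[\boldsymbol{\alpha}(\psi)\circ p_\psi]$ is a continuous map $U\to Q$ inverting $\bar\Phi$. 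This Haar-averaging step is the ingredient your proposal is missing.
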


%If we apply this theorem to two extreme cases, we
%obtain the following for each $w \in \ext F,$ see
%Corollary \ref{corollary: two extreme cases}:
%\begin{enumerate}
%\item if the $G$-action on $\bar{\mathbf{x}}$
%is transitive, then the path-component $\Psi_F
%(w)$ is homeomorphic to $\iso_{G_{{\bar{x}}}} ( w
%) \big/ \iso_G ( w )$ for any $\bar{x} \in
%\bar{\mathbf{x}},$
%\item if the $G$-action on
%$\bar{\mathbf{x}}$ is trivial, then the
%path-component $\Psi_F (w)$ is homeomorphic to
%$\iso_G (w)^{|\bar{\mathbf{x}}|-1}.$
%\end{enumerate}

Through the homeomorphism of Theorem \ref{theorem:
homeo type of Psi}, we can show that $\Psi_F (w)$ is
simply connected in some important cases.

\begin{proposition} \label{proposition: simply connected}
Under the assumption of Theorem \ref{theorem:
bijectivity with extensions}, we assume
additionally that the $G$-action on
$\bar{\mathbf{x}}$ is transitive. Given a
representation extension $w$ of $F$ and a point
$\bar{x} \in \bar{\mathbf{x}},$ if
$\res_{G_{\bar{x}}}^G U$ is irreducible for each
irreducible $G$-representation $U$ contained in
$w,$ then $\Psi_F (w)$  is simply connected.
\end{proposition}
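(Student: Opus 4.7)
The plan is to start from the concrete model furnished by Theorem \ref{theorem: homeo type of Psi} and analyze the resulting homogeneous space with the long exact sequence of a principal bundle. Transitivity lets us take $\bar{\mathbf{s}} = \{\bar{x}\}$, so Theorem \ref{theorem: homeo type of Psi} gives
\[
\Psi_F(w) \;\cong\; \iso_{G_{\bar{x}}}(w)\big/\iso_G(w).
\]
In the fibration $\iso_G(w) \hookrightarrow \iso_{G_{\bar{x}}}(w) \to \iso_{G_{\bar{x}}}(w)/\iso_G(w)$, the fiber is a product of complex general linear groups and hence connected, so the long exact sequence of homotopy groups reduces $\pi_1(\Psi_F(w))$ to the cokernel of the inclusion-induced map $\pi_1(\iso_G(w)) \to \pi_1(\iso_{G_{\bar{x}}}(w))$. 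It therefore suffices to show that this map is surjective.

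The next step is to identify both sides explicitly via Schur's lemma. Write $w \cong \bigoplus_{i=1}^{r} U_i^{m_i}$ with pairwise non-isomorphic $G$-irreducibles $U_i$. By hypothesis each $\res_{G_{\bar{x}}}^G U_i$ is $G_{\bar{x}}$-irreducible; partition $\{1,\ldots,r\} = \bigsqcup_j I_j$ so that the restrictions for $i \in I_j$ are all isomorphic to a fixed $G_{\bar{x}}$-irreducible $V_j$, with the $V_j$ pairwise non-isomorphic, and set $M_j := \sum_{i \in I_j} m_i$. Then $w \cong \bigoplus_j V_j^{M_j}$ as a $G_{\bar{x}}$-representation, and Schur yields
\[
\iso_G(w) \cong \prod_{i=1}^{r}\GL(m_i,\C), \qquad \iso_{G_{\bar{x}}}(w) \cong \prod_j \GL(M_j,\C),
\]
with the inclusion being the block-diagonal embedding $\prod_{i\in I_j} \GL(m_i,\C) \hookrightarrow \GL(M_j,\C)$ within each $j$. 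Consequently
\[
\Psi_F(w) \;\cong\; \prod_j \bigg(\GL(M_j,\C)\bigg/\prod_{i\in I_j}\GL(m_i,\C)\bigg).
\]

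Finally, within each $j$ the map $\pi_1\big(\prod_{i\in I_j}\GL(m_i,\C)\big) \to \pi_1(\GL(M_j,\C))$ is surjective: fixing any $i_0 \in I_j$, the loop $t \mapsto \operatorname{diag}(e^{2\pi i t},1,\ldots,1)$ in the $i_0$-th factor is a generator of $\pi_1(\GL(m_{i_0},\C)) \cong \Z$, and its image in $\GL(M_j,\C)$ is the standard generator of $\pi_1(\GL(M_j,\C)) \cong \Z$ (both detected by the determinant). Assembling these across $j$ gives the required surjectivity, and the long exact sequence then forces $\pi_1(\Psi_F(w)) = 0$. The only real subtlety is the bookkeeping in the middle paragraph: distinct $G$-irreducibles $U_i, U_{i'}$ can share a $G_{\bar{x}}$-irreducible restriction, which is precisely what inflates $\iso_{G_{\bar{x}}}(w)$ beyond $\iso_G(w)$ and what makes $\Psi_F(w)$ nontrivial in general; once this index juggling is set up correctly, the topological input is entirely standard.
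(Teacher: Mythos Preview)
Your proof is correct and follows essentially the same route as the paper: reduce $\Psi_F(w)$ to $\iso_{G_{\bar{x}}}(w)/\iso_G(w)$ via the homeomorphism theorem, then use the long exact sequence of the fibration together with the block-diagonal description from Schur's lemma to show $\pi_1(\iso_G(w)) \to \pi_1(\iso_{G_{\bar{x}}}(w))$ is surjective. The paper packages the partition-into-$I_j$ step and the determinant-loop surjectivity as separate lemmas (its Lemmas~\ref{lemma: first case} and~\ref{lemma: second case}), but the argument is the same.
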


\begin{corollary} \label{corollary: simply connected}
Under the assumption of Theorem \ref{theorem:
bijectivity with extensions}, if the $G$-action on
$\bar{\mathbf{x}}$ is transitive and $G$ is abelian,
$\Psi_F (w)$ is simply connected for each $w \in \ext
F.$
\end{corollary}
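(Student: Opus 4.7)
The plan is to reduce the corollary directly to Proposition \ref{proposition: simply connected} by verifying the irreducibility hypothesis appearing there. Since the $G$-action on $\bar{\mathbf{x}}$ is already assumed transitive, the only extra point to check is: for each irreducible $G$-subrepresentation $U$ of $w$, the restriction $\res_{G_{\bar{x}}}^G U$ is irreducible.

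First I would recall the standard fact that every irreducible complex representation of a compact abelian Lie group is one-dimensional; this is an immediate consequence of Schur's lemma applied to the commuting action of every group element. Applying this to $G$, any irreducible $G$-subrepresentation $U$ of $w$ satisfies $\dim_{\C} U = 1$. The restriction $\res_{G_{\bar{x}}}^G U$ therefore has complex dimension one as well, and a one-dimensional representation is automatically irreducible.

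Consequently the hypothesis of Proposition \ref{proposition: simply connected} is satisfied for every irreducible $G$-summand $U$ of $w$, and the proposition yields that $\Psi_F(w)$ is simply connected. Since $w \in \ext F$ was arbitrary, the corollary follows.

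No real obstacle arises here: the argument is a short deduction from the proposition, and the only input is the elementary representation-theoretic fact that abelian compact Lie groups have only one-dimensional irreducible complex representations.
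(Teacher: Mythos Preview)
Your argument is correct and matches the paper's own proof essentially verbatim: both note that irreducible complex representations of an abelian compact Lie group are one-dimensional, hence remain irreducible upon restriction to $G_{\bar{x}}$, and then invoke Proposition~\ref{proposition: simply connected}.
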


This paper is organized as follows. In Section
\ref{section: proof}, we prove Theorem
\ref{theorem: bijectivity with extensions},
\ref{theorem: homeo type of Psi}. Section
\ref{section: fundamental group} deals with the
fundamental group of the set of equivariant
pointwise clutching maps, and we prove
Proposition \ref{proposition: simply connected}
and Corollary \ref{corollary: simply connected}.
In Section \ref{section: lemmas}, we prove two
useful lemmas on evaluation and restriction of an
equivariant pointwise clutching map.

\bigskip

\section{Proofs} \label{section: proof}
Hereafter, let a compact Lie group $G$ act
continuously on a finite set $\bar{\mathbf{x}},$
and let $F$ be an equivariant topological complex
vector bundle over $\bar{\mathbf{x}}$ as before.
Also, we abbreviate finite-dimensional
representation of a Lie group as representation.

In this section, we prove main theorems. We begin
this section by introducing a terminology. For a
complex $G$-representation $w,$ we denote by
$\fiso_G (F, w)$ the set of all equivariant
fiberwise isomorphisms from $F$ to $w.$ Below
Lemma \ref{lemma: define clutching}, we will see
that the set $\Psi_F ( w )$ for $w \in \ext F$ is
highly related to the set $\fiso_G (F, w).$ Since
$\fiso_G (F, w)$ is relatively easy to deal with,
our strategy for proofs is to investigate
$\fiso_G (F, w)$ first and then to relate
$\fiso_G (F, w)$ to $\Psi_F (w).$

To begin with, we prove a basic lemma on
equivariant pointwise clutching maps.

\begin{lemma} \label{lemma: equivalent condition for psi}
A pointwise clutching map $\psi$ with respect to
$F$ is equivariant if and only if the vector
space $F/\psi$ can be equipped with a complex
$G$-representation structure such that $p_\psi$
is equivariant. If exists, such a
$G$-representation structure is unique.
\end{lemma}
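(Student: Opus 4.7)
The plan is to reformulate equivariance of $\psi$ in terms of the equivalence relation $\sim$ it defines on $F$, and then use the standard fact that a group action descends to a quotient exactly when it preserves the equivalence relation. Recall that $\bar{u} \sim \bar{u}^\prime$ iff $\psi(\bar{x}, \bar{x}^\prime)\bar{u} = \bar{u}^\prime$ where $\bar{u} \in F_{\bar{x}}$, $\bar{u}^\prime \in F_{\bar{x}^\prime}$. Unwinding the definition of the $G$-action on pointwise clutching maps, the identity $g\cdot\psi = \psi$ for all $g$ is equivalent to $\psi(g\bar{x}, g\bar{x}^\prime) = g\,\psi(\bar{x},\bar{x}^\prime)\,g^{-1}$ for all $g,\bar{x},\bar{x}^\prime$, and a short computation shows that this in turn is equivalent to the condition that $\bar{u}\sim\bar{u}^\prime$ implies $g\bar{u}\sim g\bar{u}^\prime$ for every $g\in G$, i.e.\ to $G$-invariance of $\sim$.

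For the forward direction, assume $\psi$ is equivariant. By the reformulation above, $\sim$ is $G$-invariant, so the $G$-action on $F$ descends to a set-theoretic $G$-action on $F/\psi$ characterized by $g\cdot p_\psi(\bar{u}) = p_\psi(g\bar{u})$; this is precisely the requirement that $p_\psi$ be $G$-equivariant. To upgrade this to a complex $G$-representation structure, I would exploit that $p_\psi$ restricts to an inequivariant linear isomorphism on each fiber. Fix any $\bar{x}\in\bar{\mathbf{x}}$; then the composition $(p_\psi|_{F_{g\bar{x}}})\circ g\circ(p_\psi|_{F_{\bar{x}}})^{-1}$ is linear because each of the three factors is, and under the identifications $F/\psi \cong F_{\bar{x}}\cong F_{g\bar{x}}$ this composition agrees with the action of $g$ on $F/\psi$. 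Continuity is automatic since $\bar{\mathbf{x}}$ is finite. Hence $F/\psi$ is a complex $G$-representation and $p_\psi$ is an equivariant fiberwise isomorphism.

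For the converse, suppose $F/\psi$ carries some $G$-representation structure making $p_\psi$ equivariant. Then $p_\psi(g\bar{u}) = g\cdot p_\psi(\bar{u}) = g\cdot p_\psi(\bar{u}^\prime) = p_\psi(g\bar{u}^\prime)$ whenever $\bar{u}\sim\bar{u}^\prime$, which gives $g\bar{u}\sim g\bar{u}^\prime$ and thus $G$-invariance of $\sim$. By the initial reformulation, $\psi$ is equivariant. Uniqueness is also immediate: any two $G$-representation structures on $F/\psi$ for which $p_\psi$ is equivariant must satisfy $g\cdot p_\psi(\bar{u}) = p_\psi(g\bar{u})$, and since $p_\psi$ is surjective this formula determines the $G$-action on $F/\psi$ completely; linearity of this action is then forced.

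I expect no serious obstacle here; the only point requiring care is the linearity of the induced action, which I would handle by transporting $g: F_{\bar{x}}\to F_{g\bar{x}}$ through the fiberwise linear isomorphism $p_\psi$ as indicated above rather than by verifying linearity directly on equivalence classes.
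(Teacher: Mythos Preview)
Your proof is correct and follows essentially the same approach as the paper: both reduce the question to whether the equivalence relation $\sim$ on $F$ is $G$-invariant (the paper phrases this as $p_\psi(\bar u)=p_\psi(\bar u')\Rightarrow p_\psi(g\bar u)=p_\psi(g\bar u')$), and then unwind that condition to recover equivariance of $\psi$. Your treatment is slightly more explicit in one respect: you verify linearity of the induced $G$-action by transporting $g:F_{\bar x}\to F_{g\bar x}$ through the fiberwise isomorphisms $p_\psi$, whereas the paper simply asserts that once the candidate action is well-defined ``it is actually an action.''
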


\begin{proof}
The unique possible group action on $F/\psi$ to
guarantee equivariance of $p_\psi$ is as follows:
\begin{equation*}
g \cdot u ~:=~ p_\psi ( g \bar{u} ) \quad \text{ for }
g \in G, ~u \in F/\psi, \text{ and any } \bar{u} \in F
\text{ such that } u=p_\psi ( \bar{u} ).
\end{equation*}
This is because if there exists such an action, it
should satisfy the following:
\begin{alignat*}{2}
p_\psi ( g \bar{u} ) &= g \cdot p_\psi ( \bar{u}
) & & \qquad \text{by equivariance of } p_\psi \\
&= g \cdot u. & & \qquad \text{by definition of }
\bar{u}
\end{alignat*}
It is easy that if the action is well-defined,
then it is actually an action. By definition, the
possible group action is well-defined if and only
if
\begin{equation}
\tag{*} p_\psi ( \bar{u} ) = p_\psi (
\bar{u}^\prime ) \quad \Longrightarrow \quad
p_\psi ( g \bar{u} ) = p_\psi ( g \bar{u}^\prime
)
\end{equation}
for any $g \in G$ and $\bar{u}, \bar{u}^\prime
\in F.$ By definition of $p_\psi,$ each side of
(*) is rewritten as follows:
\begin{align*}
p_\psi ( \bar{u} ) = p_\psi ( \bar{u}^\prime )
\quad &\Longleftrightarrow \quad \psi(\bar{x},
\bar{x}^\prime) \bar{u} =
\bar{u}^\prime \qquad \text{and}  \\
p_\psi ( g \bar{u} ) = p_\psi ( g \bar{u}^\prime
) \quad &\Longleftrightarrow \quad \psi(g
\bar{x}, g \bar{x}^\prime) g \bar{u} = g
\bar{u}^\prime, \quad \text{ i.e. } (g^{-1} \cdot
\psi)(\bar{x}, \bar{x}^\prime) \bar{u} =
\bar{u}^\prime,
\end{align*}
when $\bar{u} \in F_{\bar{x}}$ and
$\bar{u}^\prime \in F_{\bar{x}^\prime}$ for some
$\bar{x}, ~ \bar{x}^\prime \in \bar{\mathbf{x}}.$
So, (*) is rewritten as
\begin{equation*}
\psi(\bar{x}, \bar{x}^\prime) \bar{u} =
\bar{u}^\prime \quad \Longrightarrow \quad
(g^{-1} \cdot \psi)(\bar{x}, \bar{x}^\prime)
\bar{u} = \bar{u}^\prime
\end{equation*}
for any $g \in G$ and $\bar{u}, \bar{u}^\prime
\in F.$ In summary, the unique possible action is
well-defined if and only if $\psi = g^{-1} \cdot
\psi$ for any $g \in G,$ i.e. $\psi$ is
equivariant. Therefore, we obtain a proof.
\end{proof}

When $\psi$ is equivariant, the unique
$G$-representation structure on $F / \psi$ such
that $p_\psi$ is equivariant is denoted simply by
the same notation $F / \psi.$ In this case,
$p_\psi$ is contained in $\fiso_G (F, F / \psi),$
and is the prototype of equivariant fiberwise
isomorphisms.

For a time being, we investigate $\fiso_G (F,
w).$ We start by an easy lemma.

\begin{lemma} \label{lemma: become extension}
If there exists an equivariant fiberwise
isomorphism $p : F \rightarrow w$ for a complex
$G$-representation $w,$ then $w$ is contained in
$\ext F.$
\end{lemma}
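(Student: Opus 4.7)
The proof is a direct unpacking of the definitions of \emph{equivariant}, \emph{fiberwise isomorphism}, and \emph{representation extension}. My plan is to fix an arbitrary $\bar{x} \in \bar{\mathbf{x}}$ and restrict $p$ to the fiber over $\bar{x}$; I will then argue that this restriction is a $G_{\bar{x}}$-isomorphism between $F_{\bar{x}}$ and $\res_{G_{\bar{x}}}^G w$, which by arbitrariness of $\bar{x}$ places $w$ in $\ext F$.

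In more detail: since $p$ is a fiberwise isomorphism, the restriction $p|_{F_{\bar{x}}} \colon F_{\bar{x}} \to w$ is a (inequivariant) linear isomorphism of vector spaces. To upgrade this to a $G_{\bar{x}}$-equivariant statement, note that for every $g \in G_{\bar{x}}$ one has $g\bar{x} = \bar{x}$, so the $G$-action on $F$ restricts to a $G_{\bar{x}}$-action on the fiber $F_{\bar{x}}$; meanwhile the $G_{\bar{x}}$-action on $w$ is by definition $\res_{G_{\bar{x}}}^G w$. Equivariance of $p$ then gives $p(g\bar{u}) = g \cdot p(\bar{u})$ for all $\bar{u} \in F_{\bar{x}}$ and $g \in G_{\bar{x}}$, so $p|_{F_{\bar{x}}}$ is a $G_{\bar{x}}$-isomorphism $F_{\bar{x}} \cong \res_{G_{\bar{x}}}^G w$. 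Since $\bar{x}$ was arbitrary, $w$ satisfies the defining condition for membership in $\ext F$.

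There is no real obstacle here; the only thing to be careful about is keeping straight the distinction between $w$ viewed as a $G$-representation and $w$ viewed as the codomain of the fiberwise-isomorphism $p$ (so that $p|_{F_{\bar{x}}}$ lands surjectively in all of $w$, not in some subspace). Once that is noted, the statement is immediate.
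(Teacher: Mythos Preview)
Your proof is correct and is precisely the direct verification the paper has in mind; in fact the paper states this lemma without proof, treating it as an easy consequence of the definitions, and your argument supplies exactly the routine details.
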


Next, we factorize $\fiso_G (F, w)$ into a
product of isomorphism groups. By using this, we
can prove the converse of Lemma \ref{lemma:
become extension}, i.e. for any $w$ in $\ext F,$
there exists an equivariant fiberwise isomorphism
from $F$ to $w.$

\begin{proposition} \label{proposition: homeo type of fiso}
Let $\bar{\mathbf{s}} \subset \bar{\mathbf{x}}$
be a subset containing exactly one element in
each orbit of $\bar{\mathbf{x}}.$ For a complex
$G$-representation $w,$ the following map is
homeomorphic:
\[
\imath : \fiso_G (F, w) ~ \longrightarrow ~
\prod_{{\bar{s}} \in \bar{\mathbf{s}}} ~
\iso_{G_{\bar{s}}} ( F_{{\bar{s}}}, w ), \quad p
~ \longmapsto ~ \prod_{{\bar{s}} \in
\bar{\mathbf{s}}} ~ p|_{F_{{\bar{s}}}}.
\]
And, the set $\fiso_G (F, w)$ is nonempty if and
only if $w \in \ext F.$
\end{proposition}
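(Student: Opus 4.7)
The plan is to exhibit an explicit inverse for $\imath$ built from the orbit structure of the $G$-action on $\bar{\mathbf{x}}$. Well-definedness and continuity of $\imath$ itself are immediate: for each $\bar{s} \in \bar{\mathbf{s}}$ the stabilizer $G_{\bar{s}}$ preserves $F_{\bar{s}}$ and acts on $w$, so an equivariant fiberwise isomorphism $p$ restricts to a $G_{\bar{s}}$-isomorphism $p|_{F_{\bar{s}}} : F_{\bar{s}} \to w$, and restriction is continuous in the product topology (recall $\bar{\mathbf{x}}$ is finite, so $F$ is just a finite direct sum of fibers and $\fiso_G(F,w)$ sits inside the corresponding finite product of $\iso$-spaces).

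For each orbit representative $\bar{s} \in \bar{\mathbf{s}}$ and each $\bar{x} \in G\bar{s}$, fix once and for all an element $g_{\bar{x}} \in G$ with $\bar{x} = g_{\bar{x}}\bar{s}$ (taking $g_{\bar{s}} = e$). Given $(p_{\bar{s}})_{\bar{s} \in \bar{\mathbf{s}}}$ in the product, I would define $p : F \to w$ fiberwise by
\[
p\bigl( g_{\bar{x}} \bar{u}_0 \bigr) := g_{\bar{x}} \cdot p_{\bar{s}}(\bar{u}_0), \qquad \bar{u}_0 \in F_{\bar{s}}, ~ \bar{x} \in G\bar{s},
\]
and extend equivariantly. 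The main point to check — and the only place where the hypothesis that each $p_{\bar{s}}$ is $G_{\bar{s}}$-equivariant is used — is well-definedness on all of $F$: for $\bar{x} = g\bar{s} = g'\bar{s}$ one has $h := g'^{-1}g \in G_{\bar{s}}$, and the $G_{\bar{s}}$-equivariance of $p_{\bar{s}}$ gives $g \cdot p_{\bar{s}}(\bar{u}_0) = g'h \cdot p_{\bar{s}}(\bar{u}_0) = g' \cdot p_{\bar{s}}(h\bar{u}_0)$, so the value is independent of the chosen representative. Equivariance of the resulting $p$ on the whole bundle follows formally from this together with the fact that the $g_{\bar{x}}$ exhaust each orbit. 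This construction is clearly inverse to $\imath$, and it is continuous in $(p_{\bar{s}})$ because it is built from the fixed finite collection of translations $g_{\bar{x}}$ and the representation $w$. Hence $\imath$ is a homeomorphism.

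The equivalence "$\fiso_G(F,w) \neq \emptyset \iff w \in \ext F$" is then a direct consequence. The forward implication is exactly Lemma \ref{lemma: become extension}. Conversely, if $w \in \ext F$, then by definition $\res_{G_{\bar{s}}}^G w \cong F_{\bar{s}}$ for every $\bar{s} \in \bar{\mathbf{s}}$, so each factor $\iso_{G_{\bar{s}}}(F_{\bar{s}}, w)$ is nonempty; the product is therefore nonempty, and the homeomorphism $\imath$ transports any element of the product to an element of $\fiso_G(F,w)$.

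I expect no substantial obstacle beyond bookkeeping: the only conceptual step is recognizing that the stabilizer equivariance of the factor maps $p_{\bar{s}}$ is precisely what is needed to patch them together into a globally well-defined, $G$-equivariant fiberwise isomorphism, and the rest reduces to continuity checks over a finite base.
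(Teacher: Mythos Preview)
Your proof is correct and follows essentially the same route as the paper: both construct the inverse of $\imath$ orbit-by-orbit via the formula $g\bar{u}_0 \mapsto g \cdot p_{\bar{s}}(\bar{u}_0)$, with your explicit choice of coset representatives $g_{\bar{x}}$ playing the role of the paper's associated-bundle identification $F_{G\cdot\bar{s}} \cong G \times_{G_{\bar{s}}} F_{\bar{s}}$ (cited from Bredon). The phrase ``and extend equivariantly'' is superfluous since your displayed formula already defines $p$ on every fiber, but the independence-of-representative check you give is exactly what is needed for $G$-equivariance, and the argument goes through.
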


\begin{proof}
First, we describe $F$ precisely. Since
\begin{equation*}
\bar{\mathbf{x}} = \coprod_{\bar{s} \in
\bar{\mathbf{s}}} G \cdot \bar{s},
\end{equation*}
the equivariant topological vector bundle $F$ can be
regarded as the disjoint union of $F$ restricted to
each orbit as follows:
\begin{equation*}
F = \coprod_{\bar{s} \in \bar{\mathbf{s}}} F_{G \cdot
\bar{s}},
\end{equation*}
where $\coprod$ is the notation for disjoint
union. By these expressions, we can define the
following map:
\begin{equation*}
\fiso_G (F, w) \longrightarrow \prod_{\bar{s} \in
\bar{\mathbf{s}}} ~ \fiso_G (F_{G \cdot \bar{s}},
w), \quad p \longmapsto \prod_{\bar{s} \in
\bar{\mathbf{s}}} ~ ~ p|_{F_{G \cdot \bar{s}}}.
\end{equation*}
It is easy that the map is homeomorphic. To
obtain a proof for the first statement, we will
show that the following map is a homeomorphism:
\begin{equation}
\tag{*} \fiso_G (F_{G \cdot \bar{s}}, w)
\longrightarrow \iso_{G_{\bar{s}}} (
F_{{\bar{s}}}, w ), \quad q \mapsto
q|_{F_{{\bar{s}}}}
\end{equation}
for each $\bar{s} \in \bar{\mathbf{s}}.$ For
this, we would decompose the map (*) into a
composition of two homeomorphisms. For $\bar{s}
\in \bar{\mathbf{s}},$ the equivariant vector
bundle $F_{G \cdot \bar{s}}$ is $G$-isomorphic to
$G \times_{G_{\bar{s}}} F_{\bar{s}}$ through the
inverse of the following $G$-isomorphism:
\[
G \times_{G_{\bar{s}}} F_{\bar{s}} \rightarrow
F_{G \cdot \bar{s}}, \quad [g, \bar{u}] \mapsto g
\bar{u},
\]
see \cite[Proposition II.3.2]{B}. By this, we
obtain a homeomorphism from $\fiso_G (F_{G \cdot
\bar{s}}, w)$ to $\fiso_G (G \times_{G_{\bar{s}}}
F_{\bar{s}}, w).$ Next, we consider the continous
map
\begin{equation}
\tag{**} \fiso_G (G \times_{G_{\bar{s}}}
F_{\bar{s}}, w) \rightarrow \iso_{G_{\bar{s}}} (
F_{{\bar{s}}}, w ), \quad q \mapsto
q|_{F_{\bar{s}}},
\end{equation}
where the fiber $[ \id, F_{\bar{s}} ]$ of $G
\times_{G_{\bar{s}}} F_{\bar{s}}$ is identified
with $F_{\bar{s}}.$ To show that the map is
homeomorphic, we construct its inverse. For any
$G_{\bar{s}}$-isomorphism $r$ from $F_{\bar{s}}$
to $w,$ define the following equivariant
fiberwise isomorphism:
\begin{equation*}
G \times_{G_{\bar{s}}} F_{\bar{s}}
\longrightarrow w, \quad [g, \bar{u}] \mapsto g
\cdot r ( \bar{u}).
\end{equation*}
In this way, we obtain a map from
$\iso_{G_{\bar{s}}} ( F_{{\bar{s}}}, w )$ to
$\fiso_G (G \times_{G_{\bar{s}}} F_{\bar{s}},
w).$ Easily, the map is the continuous inverse of
the map (**). So, we have obtained the following
decomposition of (*) into a composition of two
homeomorphisms:
\[
\fiso_G (F_{G \cdot \bar{s}}, w) ~
\longrightarrow ~ \fiso_G (G \times_{G_{\bar{s}}}
F_{\bar{s}}, w) ~ \longrightarrow ~
\iso_{G_{\bar{s}}} ( F_{{\bar{s}}}, w ).
\]
As a result, $\fiso_G (F_{G \cdot \bar{s}}, w)$
is homeomorphic to $\iso_{G_{\bar{s}}} (
F_{{\bar{s}}}, w ).$ Therefore, we obtain a proof
for the first statement.

Now, we prove the second statement. Since we have
already obtained sufficiency by Lemma \ref{lemma:
become extension}, we only have to show necessity. By
definition of $\ext F,$ the space $\iso_{G_{\bar{s}}}
( F_{{\bar{s}}}, w )$ for each $\bar{s} \in
\bar{\mathbf{s}}$ is nonempty. So, $\fiso_G (F, w)$ is
nonempty by the homeomorphism of the first statement,
and necessity holds. Therefore, we obtain a proof for
the second statement.
\end{proof}

%\begin{remark} \label{remark: smooth structure}
%We endow $\fiso_G (F, w)$ with a unique smooth
%structure to guarantee that the homeomorphism of
%Proposition \ref{proposition: homeo type of fiso}
%becomes a diffeomorphism.
%\end{remark}
%
%We need this remark???

\begin{corollary} \label{corollary:
path-connected} The set $\fiso_G (F, w)$ is
nonempty and path-connected for any $w \in \ext
F.$
\end{corollary}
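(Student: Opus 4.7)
The plan is to reduce everything to Proposition \ref{proposition: homeo type of fiso} and then to a standard fact about isomorphism groups of complex representations. Nonemptiness is already immediate: since $w \in \ext F,$ the second statement of Proposition \ref{proposition: homeo type of fiso} gives $\fiso_G(F,w) \neq \emptyset.$ It remains to prove path-connectedness.

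First I would invoke the homeomorphism
\[
\fiso_G (F, w) ~\cong~ \prod_{\bar{s} \in \bar{\mathbf{s}}} \iso_{G_{\bar{s}}} ( F_{\bar{s}}, w )
\]
from Proposition \ref{proposition: homeo type of fiso}, which reduces the problem to showing that each factor $\iso_{G_{\bar{s}}} ( F_{\bar{s}}, w )$ is path-connected. Since $w \in \ext F,$ this factor is nonempty, so fixing any $r_0 \in \iso_{G_{\bar{s}}} ( F_{\bar{s}}, w )$ turns it into a left-coset space of the group $\iso_{G_{\bar{s}}}(w)$ of $G_{\bar{s}}$-automorphisms of $w$ via the homeomorphism $A \mapsto A \circ r_0.$ Hence it suffices to show that $\iso_{G_{\bar{s}}}(w)$ is path-connected.

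The hard step, and really the only nontrivial one, is verifying this path-connectedness. I would decompose $w$ as a $G_{\bar{s}}$-representation into its isotypic components $w \cong \bigoplus_{U \in \Irr(G_{\bar{s}})} U^{\oplus n_U},$ where $n_U$ is the multiplicity of the irreducible $U.$ By Schur's lemma,
\[
\iso_{G_{\bar{s}}}(w) ~\cong~ \prod_{U} \GL(n_U, \C),
\]
a finite product of complex general linear groups. Each $\GL(n_U, \C)$ is path-connected (e.g., any invertible complex matrix can be joined to the identity by a path in $\GL(n_U, \C)$ since $\C \setminus \{0\}$ is path-connected and one may use a Jordan form plus a straight-line homotopy argument), so the product is path-connected.

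Combining the two steps, $\fiso_G(F,w)$ is a finite product of path-connected spaces, hence path-connected. I expect the only substantive ingredient to be the Schur-decomposition of $\iso_{G_{\bar{s}}}(w)$ as a product of $\GL(n_U, \C)$'s; everything else is just assembling the homeomorphisms supplied by the previous proposition.
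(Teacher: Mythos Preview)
Your proof is correct and follows essentially the same route as the paper: reduce via Proposition \ref{proposition: homeo type of fiso} to each factor $\iso_{G_{\bar{s}}}(F_{\bar{s}},w)$, identify it with $\iso_{G_{\bar{s}}}(w)$ using that $w\in\ext F$, and then invoke Schur's lemma to write $\iso_{G_{\bar{s}}}(w)$ as a product of complex general linear groups, which are path-connected. The only cosmetic difference is that you spell out the homeomorphism $A\mapsto A\circ r_0$ and the isotypic decomposition explicitly, whereas the paper defers those details to Section~\ref{section: fundamental group}.
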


\begin{proof}
By Proposition \ref{proposition: homeo type of
fiso}, we only have to show that
$\iso_{G_{\bar{s}}} ( F_{{\bar{s}}}, w )$ is
nonempty and path-connected for each $\bar{s} \in
\bar{\mathbf{s}}.$ By definition of $\ext F,$ the
representation $F_{\bar{s}}$ is
$G_{\bar{s}}$-isomorphic to $\res_{G_{\bar{s}}}^G
w$ for each $\bar{s} \in \bar{\mathbf{s}}.$ So,
$\iso_{G_{\bar{s}}} ( F_{{\bar{s}}}, w )$ for
each $\bar{s} \in \bar{\mathbf{s}}$ is nonempty,
and is homeomorphic to $\iso_{G_{\bar{s}}} ( w
).$ By Schur's lemma, $\iso_{G_{\bar{s}}} ( w )$
is homeomorphically group isomorphic to a product
of general linear groups over $\C,$ see Section
\ref{section: fundamental group} for detail.
Therefore, it is path-connected, and we obtain a
proof. Here, note that we use the assumption that
the scalar field is $\C.$
\end{proof}

Similar to $\Psi_F(w),$ we hereafter consider
$\fiso_G (F, w)$ just for $w \in \ext F$ by Lemma
\ref{lemma: become extension}. Now, we start
relating $\fiso_G (F, w)$ to $\Psi_F (w).$ By
Lemma \ref{lemma: equivalent condition for psi},
an equivariant pointwise clutching map $\psi$
determines an equivariant fiberwise isomorphism
$p_\psi.$ In the next useful lemma, we prove a
kind of converse of this, i.e. an equivariant
fiberwise isomorphism determines an equivariant
pointwise clutching map.

\begin{lemma} \label{lemma: define clutching}
For any $p \in \fiso_G (F, w)$ for $w \in \ext
F,$ there exists a unique map $\psi$ in $\Psi_F
(w)$ such that the following diagram commutes for
some $G$-isomorphism from $F / \psi$ to $w:$
\begin{equation*}
\SelectTips{cm}{} \xymatrix{ F \ar[r]^-{p}
\ar[d]_-{p_\psi} & w \\
F \big/ \psi \ar@{->}[ur]_-{\cong} }.
\end{equation*}
And, the set $\Psi_F (w)$ is nonempty for any $w
\in \ext F.$
\end{lemma}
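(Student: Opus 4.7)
The plan is to use $p$ itself to manufacture $\psi$ by the obvious formula and then verify every property in turn. Concretely, for $\bar{x}, \bar{x}^\prime \in \bar{\mathbf{x}}$ I would define
\[
\psi(\bar{x}, \bar{x}^\prime) ~:=~ (p|_{F_{\bar{x}^\prime}})^{-1} \circ p|_{F_{\bar{x}}} ~\in~ \iso\bigl(F_{\bar{x}}, F_{\bar{x}^\prime}\bigr),
\]
which makes sense because $p$ is a fiberwise isomorphism. Reflexivity, symmetry and transitivity are then immediate algebraic identities of the form $(A^{-1}A = \id)$, $(A^{-1}B)^{-1} = B^{-1}A$, and $(B^{-1}C)(A^{-1}B) = A^{-1}C$ after cancellation.

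Next I would check that $\psi$ is $G$-equivariant. Writing the fiberwise action of $g \in G$ as $\rho_g : F_{\bar{x}} \to F_{g\bar{x}}$ and its action on $w$ as $\sigma_g$, equivariance of $p$ says $p|_{F_{g\bar{x}}} \circ \rho_g = \sigma_g \circ p|_{F_{\bar{x}}}$; substituting this into the definition of $\psi$ at $(g^{-1}\bar{x}, g^{-1}\bar{x}^\prime)$ and using that $\sigma_g$ is a bijection on $w$ gives exactly $\rho_g \, \psi(g^{-1}\bar{x}, g^{-1}\bar{x}^\prime) \, \rho_{g^{-1}} = \psi(\bar{x}, \bar{x}^\prime)$, which is the condition $g \cdot \psi = \psi$. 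By Lemma \ref{lemma: equivalent condition for psi}, the quotient $F / \psi$ then inherits a (unique) $G$-representation structure making $p_\psi$ equivariant.

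To produce the diagonal $G$-isomorphism $F/\psi \to w$, I would use the universal property of the quotient: the map $\phi([\bar{u}]) := p(\bar{u})$ is well-defined precisely because $[\bar{u}] = [\bar{u}^\prime]$ means $\psi(\bar{x}, \bar{x}^\prime)\bar{u} = \bar{u}^\prime$, which by definition of $\psi$ is the same as $p(\bar{u}) = p(\bar{u}^\prime)$. The same equivalence, read in reverse, shows $\phi$ is injective; surjectivity and linearity follow fiberwise since $p_\psi|_{F_{\bar{x}}}$ is a linear bijection onto $F/\psi$ and $\phi = p|_{F_{\bar{x}}} \circ (p_\psi|_{F_{\bar{x}}})^{-1}$. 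Equivariance of $\phi$ is forced by equivariance of $p$ and $p_\psi$. In particular $F/\psi \cong w$, so $\psi \in \Psi_F(w)$.

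For uniqueness, if some $\psi^\prime \in \Psi_F(w)$ factored $p$ through $p_{\psi^\prime}$ via a $G$-isomorphism $\phi^\prime$, then for $\bar{u} \in F_{\bar{x}}$, $\bar{u}^\prime \in F_{\bar{x}^\prime}$,
\[
\psi^\prime(\bar{x}, \bar{x}^\prime)\bar{u} = \bar{u}^\prime ~\Longleftrightarrow~ p_{\psi^\prime}(\bar{u}) = p_{\psi^\prime}(\bar{u}^\prime) ~\Longleftrightarrow~ p(\bar{u}) = p(\bar{u}^\prime) ~\Longleftrightarrow~ \psi(\bar{x}, \bar{x}^\prime)\bar{u} = \bar{u}^\prime,
\]
so $\psi^\prime = \psi$. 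Finally, nonemptiness of $\Psi_F(w)$ for $w \in \ext F$ drops out for free: by Proposition \ref{proposition: homeo type of fiso} the set $\fiso_G(F, w)$ is nonempty, pick any $p$ in it and apply the construction above. No step is truly an obstacle; the only point that needs care is bookkeeping of the $G$-action through the inverses in the definition of $\psi$, i.e. verifying equivariance on the nose.
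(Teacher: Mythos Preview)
Your proof is correct and follows essentially the same route as the paper: both define $\psi(\bar{x},\bar{x}')=(p|_{F_{\bar{x}'}})^{-1}\circ p|_{F_{\bar{x}}}$, verify the pointwise-clutching axioms and $G$-equivariance from the equivariance of $p$, construct the factorizing isomorphism $F/\psi\to w$ via $[\bar u]\mapsto p(\bar u)$, and deduce nonemptiness from Proposition~\ref{proposition: homeo type of fiso}. The only cosmetic difference is order of exposition: the paper first derives the formula for $\psi$ from the commutativity requirement (so uniqueness is built into the derivation) and checks equivariance last, whereas you posit the formula, verify everything, and do uniqueness at the end.
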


\begin{proof}
First, we construct the unique pointwise
clutching map $\psi$ with respect to $F$
satisfying the diagram for some inequivariant
isomorphism from $F/\psi$ to $w.$ If there exists
such a map $\psi,$ then it should satisfy the
following:
\begin{equation}
\tag{*} p_\psi ( \bar{u} ) = p_\psi (
\bar{u}^\prime ) ~ \Longrightarrow ~ p ( \bar{u}
) = p ( \bar{u}^\prime ) \quad \text{ for any }
\bar{u}, \bar{u}^\prime \in F
\end{equation}
by the diagram. When $\bar{u} \in F_{\bar{x}}$
and $\bar{u}^\prime \in F_{\bar{x}^\prime}$ for
some $\bar{x}, \bar{x}^\prime \in
\bar{\mathbf{x}},$ each side of (*) is rewritten
as follows:
\begin{align*}
p_\psi ( \bar{u} ) = p_\psi ( \bar{u}^\prime ) ~
& \Longleftrightarrow ~ \bar{u}^\prime = \psi(
\bar{x}, \bar{x}^\prime ) \bar{u} \quad \qquad
\text{
and} \\
p ( \bar{u} ) = p ( \bar{u}^\prime ) ~ &
\Longleftrightarrow ~ p|_{F_{\bar{x}}} ( \bar{u}
) = p|_{F_{\bar{x}^\prime}} ( \bar{u}^\prime ).
\end{align*}
Substituting these into (*), the formula (*) is
rewritten as
\[
p|_{F_{\bar{x}}} ( \bar{u} ) ~ = ~
p|_{F_{\bar{x}^\prime}} \Big( \psi( \bar{x},
\bar{x}^\prime ) \bar{u} \Big) \qquad \text{ for
any } \bar{u} \in F_{\bar{x}} \text{ and }
\bar{x}^\prime \in \bar{\mathbf{x}}.
\]
So, we obtain
\begin{equation}
\tag{**} \psi(\bar{x}, \bar{x}^\prime) = (
p|_{F_{\bar{x}^\prime}} )^{-1} \circ
(p|_{F_{\bar{x}}}) \qquad \text{ for } \bar{x},
\bar{x}^\prime \in \bar{\mathbf{x}}
\end{equation}
because $p$ is a fiberwise isomorphism. The
unique bijective map $\alpha: F/\psi \rightarrow
w$ satisfying the diagram is expressed as
follows:
\[
\alpha(u) := p(\bar{u}) \quad \text{for any } u
\in F/\psi \text{ and } \bar{u} \in F \text{ such
that } p_\psi (\bar{u}) =u.
\]
This is because
\[
p (\bar{u}) ~=~ ( \alpha \circ p_\psi ) (\bar{u})
~=~ \alpha \big( p_\psi (\bar{u}) \big) ~=~
\alpha (u).
\]
It is easy that $\alpha$ is linear, so $\alpha$
is an inequivariant isomorphism satisfying the
diagram. And, $\psi$ is the wanted unique
pointwise clutching map.

Now, we will show that $\psi$ and $\alpha$ are
equivariant. Equivariance of $p$ can be expressed
as follows:
\[
p ( g^{-1} \bar{u} ) = g^{-1} \cdot p ( \bar{u} )
\qquad \text{ for each } g \in G \text{ and }
\bar{u} \in F.
\]
If we restrict the expression on each fiber
$F_{\bar{x}}$ for $\bar{x} \in \bar{\mathbf{x}},$ we
obtain the following:
\begin{equation}
\tag{***} \big( p|_{F_{g^{-1} \bar{x}}} \big)
g^{-1} = g^{-1} \cdot \big( p|_{F_{\bar{x}}}
\big),
\end{equation}
and by taking inverses of both terms of (***), we
obtain
\begin{equation}
\tag{****} g \big( p|_{F_{g^{-1} \bar{x}}}
\big)^{-1} = \big( p|_{F_{\bar{x}}} \big)^{-1} g
\qquad \text{ on } w.
\end{equation}
Then, we obtain
\begin{alignat*}{2}
(g \cdot \psi)(\bar{x}, \bar{x}^\prime) & ~=~ g
\psi( g^{-1} \bar{x}, g^{-1} \bar{x}^\prime
) g^{-1} & & \qquad \text{by definition of }
g \cdot \psi  \\
& ~=~ g ( p|_{F_{g^{-1} \bar{x}^\prime}} )^{-1}
\circ (p|_{F_{g^{-1} \bar{x}}}) g^{-1}
& & \qquad \text{by } (**) \\
& ~=~ \big[ \big( p|_{F_{\bar{x}^\prime}}
\big)^{-1} g \big] \circ \big[ g^{-1} \big(
p|_{F_{\bar{x}}} \big) \big]
& & \qquad \text{by } (***) \text{ and } (****) \\
& ~=~ \big( p|_{F_{\bar{x}^\prime}}
\big)^{-1} \circ \big( p|_{F_{\bar{x}}} \big) & & \\
& ~=~ \psi(\bar{x}, \bar{x}^\prime) & & \qquad
\text{by } (**)
\end{alignat*}
for any $g \in G$ and $\bar{x}, \bar{x}^\prime
\in \bar{\mathbf{x}},$ so $\psi$ is equivariant.
And, this implies that $p_\psi$ is equivariant by
Lemma \ref{lemma: equivalent condition for psi}.
Since the diagram commutes and two maps $p,$
$p_\psi$ are equivariant, the isomorphism
$\alpha$ is also equivariant. Therefore, we
obtain a proof for the first statement.

Now, we prove the second statement. For $w \in
\ext F,$ there exists an element $p$ in $\fiso
(F, w)$ by the second statement of Proposition
\ref{proposition: homeo type of fiso}. And,
existence of $p$ guarantees nonemptiness of
$\Psi_F (w)$ by the first statement of the lemma.
Therefore, we obtain a proof.
\end{proof}

By using Lemma \ref{lemma: define clutching}, we
can define a map $\Cl : \fiso_G (F, w)
\rightarrow \Psi_F (w)$ for $w \in \ext F$
sending an equivariant fiberwise isomorphism $p$
to the equivariant pointwise clutching map $\psi$
determined by $p,$ where $\Cl$ is the first two
letters of clutching map. It is easy that the map
$\Cl$ is continuous by (**) in the proof of Lemma
\ref{lemma: define clutching}. Though the map
$\Cl$ relates $\fiso_G (F, w)$ to $\Psi_F (w),$
readers would not be satisfied with it. To obtain
a more refined relation, we consider a continuous
$\iso_G (w)$-action on $\fiso_G (F, w)$ as
follows:
\[
\alpha \cdot p ~:=~ \alpha \circ p \qquad \text{
for } \alpha \in \iso_G (w) \text{ and } p \in
\fiso_G (F, w).
\]
Then, the map
\begin{equation*}
\cl : \fiso_G (F, w) / \iso_G (w) \longrightarrow
\Psi_F (w), \quad [p] \rightarrow \Cl (p)
\end{equation*}
is well-defined by definition of $\Cl,$ and also
continuous by the universal property of the
quotient map from $\fiso_G (F, w)$ to $\fiso_G
(F, w) / \iso_G (w).$ Moreover, the following
holds:

\begin{lemma} \label{lemma: cl is bijective}
For each $w \in \ext F,$ the map $\cl$ is
bijective.
\end{lemma}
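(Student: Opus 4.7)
The plan is to prove surjectivity and injectivity of $\cl$ separately, exploiting the factorization statement of Lemma \ref{lemma: define clutching} in both directions.

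For surjectivity, I would start with $\psi \in \Psi_F (w)$. By definition of $\Psi_F(w) = \gl^{-1}(w)$, the glued representation $F / \psi$ is $G$-isomorphic to $w$, so there exists a $G$-isomorphism $\alpha : F/\psi \to w$. Setting $p := \alpha \circ p_\psi$ produces an element of $\fiso_G (F, w)$, since $p_\psi \in \fiso_G(F, F/\psi)$ (as noted just after Lemma \ref{lemma: equivalent condition for psi}) and composition with a $G$-isomorphism preserves equivariance and fiberwise bijectivity. It then remains to check $\Cl (p) = \psi$: plugging $p = \alpha \circ p_\psi$ into formula (**) of the proof of Lemma \ref{lemma: define clutching} gives
\[
\Cl(p)(\bar{x}, \bar{x}^\prime) = \big( p|_{F_{\bar{x}^\prime}} \big)^{-1} \circ \big( p|_{F_{\bar{x}}} \big) = \big( p_\psi|_{F_{\bar{x}^\prime}} \big)^{-1} \circ \alpha^{-1} \circ \alpha \circ \big( p_\psi|_{F_{\bar{x}}} \big),
\]
and the right-hand side is $\psi (\bar{x}, \bar{x}^\prime)$ by the very definition of $p_\psi$.

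For injectivity, suppose $p_1, p_2 \in \fiso_G (F, w)$ satisfy $\Cl (p_1) = \Cl (p_2) = \psi$. By the first statement of Lemma \ref{lemma: define clutching}, for $i = 1, 2$ there is a $G$-isomorphism $\alpha_i : F / \psi \to w$ making the triangle $p_i = \alpha_i \circ p_\psi$ commute. Setting $\beta := \alpha_2 \circ \alpha_1^{-1} \in \iso_G(w)$ gives $p_2 = \beta \circ p_1 = \beta \cdot p_1$, so $[p_1] = [p_2]$ in $\fiso_G (F, w) / \iso_G (w)$.

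I do not expect a serious obstacle here; all the substance has already been absorbed into Lemma \ref{lemma: define clutching} and into the second statement of Proposition \ref{proposition: homeo type of fiso} (which ensures $\fiso_G(F,w)$ is nonempty so the quotient $\fiso_G(F,w)/\iso_G(w)$ is a genuine set to map out of). The only point that deserves care is making sure that in the surjectivity step the isomorphism $\alpha : F/\psi \to w$ is taken to be $G$-equivariant, not merely a linear one, but this is available precisely because $\psi \in \gl^{-1}(w)$ means $F/\psi$ and $w$ are isomorphic as $G$-representations.
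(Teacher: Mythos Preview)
Your proof is correct and follows essentially the same approach as the paper. The surjectivity argument is identical; for injectivity the paper computes directly from formula~(**) that $p_2|_{F_{\bar{x}}} \circ (p_1|_{F_{\bar{x}}})^{-1}$ is independent of $\bar{x}$ and hence defines an element $\beta \in \iso_G(w)$, whereas you obtain the same $\beta$ by invoking the factorization $p_i = \alpha_i \circ p_\psi$ from Lemma~\ref{lemma: define clutching} and setting $\beta = \alpha_2 \circ \alpha_1^{-1}$ --- a cosmetic difference only.
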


\begin{proof}
First, we prove surjectivity. We only have to
show that $\Cl$ is surjective. Recalling that the
domain and codomain of $\cl$ are nonempty by
Proposition \ref{proposition: homeo type of fiso}
and Lemma \ref{lemma: define clutching}, we will
find a preimage of an arbitrary element $\psi \in
\Psi_F (w)$ under $\Cl.$ Note that $\alpha \circ
p_\psi$ for any $\alpha \in \iso_G ( F / \psi, w
)$ is an equivariant fiberwise isomorphism in
$\fiso_G (F, w),$ where nonemptiness of $\iso_G (
F / \psi, w )$ is guaranteed by definition of
$\Psi_F (w).$ By definition of $\Cl,$ it is easy
that $\Cl( \alpha \circ p_\psi ) = \psi.$ So,
$\Cl$ is surjective. Therefore, $\cl$ is
surjective.

Next, we prove injectivity. Assume that $\cl ([p]) =
\cl ([p^\prime])$ for $p,$ $p^\prime \in \fiso_G (F,
w).$ Put $\psi = \Cl (p) = \Cl (p^\prime).$ By
definition of $\Cl,$
\[
\psi(\bar{x}, \bar{x}^\prime) = (
p|_{F_{\bar{x}^\prime}} )^{-1} \circ
p|_{F_{\bar{x}}} = (
p^\prime|_{F_{\bar{x}^\prime}} )^{-1} \circ
p^\prime|_{F_{\bar{x}}}
\]
for any $\bar{x},$ $\bar{x}^\prime \in
\bar{\mathbf{x}}.$ Then, we have
\[
p^\prime|_{F_{\bar{x}^\prime}} \circ (
p|_{F_{\bar{x}^\prime}} )^{-1} =
p^\prime|_{F_{\bar{x}}} \circ
(p|_{F_{\bar{x}}})^{-1},
\]
i.e. $(p^\prime|_{F_{\bar{x}}}) \circ
(p|_{F_{\bar{x}}})^{-1}$ is constant regardless
of $\bar{x},$ and is contained in $\iso (w)$
because $p, p^\prime$ are fiberwise isomorphisms.
If we denote by $\beta$ this inequivariant
isomorphism, then we have $p^\prime = \beta \circ
p.$ Easily, $\beta \in \iso_G (w)$ because $p,
p^\prime$ are equivariant. So, $p^\prime = \beta
\cdot p,$ and hence $\cl ([p]) = \cl
([p^\prime]).$ Therefore, we obtain a proof.
\end{proof}

\begin{remark} \label{remark: inverse of cl}
In the proof of Lemma \ref{lemma: cl is
bijective}, we constructed the inverse $\cl^{-1}$
as follows:
\begin{equation*} \label{equation: inverse}
\cl^{-1} : \Psi_F (w) \longrightarrow \fiso_G (F,
w) / \iso_G (w), \qquad \psi \rightarrow [\alpha
\circ p_\psi]
\end{equation*}
for any map $\alpha \in \iso_G ( F / \psi, w ).$
Note that $\alpha$ is dependent of $\psi$ because
the domain $F / \psi$ of $\alpha$ is dependent of
$\psi.$ To stress this dependence, we will denote
$\alpha$ by $\alpha_\psi.$ Since continuity of
the inverse is our next issue, we will refer to
the remark several times.
\end{remark}

For a time being, we will prove that $\cl^{-1}$
is also continuous for each $w \in \ext F.$ For
this, we should construct $p_\psi$ and
$\alpha_\psi$ which vary continuously with $\psi
\in \Psi_F$ by Remark \ref{remark: inverse of
cl}. At the present stage, this is not easy
because the representation space (i.e. the vector
space structure) of $F / \psi$ varies with
$\psi.$ To solve this problem, we need give a new
definition of $F / \psi.$ As a candidate of it,
we define a new $G$-representation. Pick a point
$\bar{x}_0 \in \bar{\mathbf{x}}.$

\begin{definition} \label{definition: star
operation} For $\psi \in \Psi_F,$ we define the
operation $\star_\psi$ of $G$ on $F_{\bar{x}_0}$
as follows:
\begin{equation*}
g \star_\psi \bar{u} ~:=~ \psi(g \bar{x}_0, \bar{x}_0)
g \bar{u} \qquad \textit{ for } g \in G \text{ and }
\bar{u} \in F_{\bar{x}_0}.
\end{equation*}
In particular, the operation restricted to
$G_{\bar{x}_0}$ is identical with the
representation $F_{\bar{x}_0},$ i.e. $g
\star_\psi \bar{u} = g \bar{u}$ for $g \in
G_{\bar{x}_0}.$
\end{definition}

We show that the operation becomes a
$G$-representation.

\begin{lemma} \label{lemma: become a representation}
For any $\psi \in \Psi_F,$ the operation
$\star_\psi$ of $G$ on $F_{\bar{x}_0}$ is a
complex $G$-representation.
\end{lemma}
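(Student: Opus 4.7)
The plan is to check the four axioms of a complex $G$-representation for the operation $\star_\psi$ one by one: (i) for each fixed $g \in G$, the map $\bar{u} \mapsto g \star_\psi \bar{u}$ is a $\C$-linear isomorphism of $F_{\bar{x}_0}$; (ii) $e \star_\psi \bar{u} = \bar{u}$; (iii) $(gh) \star_\psi \bar{u} = g \star_\psi (h \star_\psi \bar{u})$; (iv) the map $G \times F_{\bar{x}_0} \to F_{\bar{x}_0}$, $(g, \bar{u}) \mapsto g \star_\psi \bar{u}$, is continuous. Axioms (i) and (ii) are essentially free: linearity follows because $\psi(g\bar{x}_0, \bar{x}_0) \in \iso(F_{g\bar{x}_0}, F_{\bar{x}_0})$ is $\C$-linear and $g$ acts linearly fiberwise, while $e \star_\psi \bar{u} = \psi(\bar{x}_0, \bar{x}_0)\,\bar{u} = \bar{u}$ is exactly the reflexivity axiom for $\psi$.

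The substantive step is (iii), which is where the equivariance and transitivity of $\psi$ both get used. First I would rewrite the equivariance condition $g \cdot \psi = \psi$ in the conjugated form
\[
\psi(g\bar{x}, g\bar{x}^\prime) \circ g \;=\; g \circ \psi(\bar{x}, \bar{x}^\prime) \qquad \text{on } F_{\bar{x}},
\]
for all $g \in G$ and $\bar{x}, \bar{x}^\prime \in \bar{\mathbf{x}}$. Applying this with $\bar{x} = h\bar{x}_0$ and $\bar{x}^\prime = \bar{x}_0$ allows one to pass $g$ across $\psi(h\bar{x}_0, \bar{x}_0)$, giving
\[
g \star_\psi (h \star_\psi \bar{u}) \;=\; \psi(g\bar{x}_0, \bar{x}_0)\,\psi(gh\bar{x}_0, g\bar{x}_0)\,(gh)\bar{u}.
\]
Then I would collapse the product $\psi(g\bar{x}_0, \bar{x}_0)\,\psi(gh\bar{x}_0, g\bar{x}_0)$ into $\psi(gh\bar{x}_0, \bar{x}_0)$ using the transitivity axiom $\psi(\bar{x}, \bar{x}^{\prime\prime}) = \psi(\bar{x}^\prime, \bar{x}^{\prime\prime})\,\psi(\bar{x}, \bar{x}^\prime)$ applied with $\bar{x} = gh\bar{x}_0$, $\bar{x}^\prime = g\bar{x}_0$, $\bar{x}^{\prime\prime} = \bar{x}_0$. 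This yields $\psi(gh\bar{x}_0, \bar{x}_0)\,(gh)\bar{u} = (gh) \star_\psi \bar{u}$, proving associativity.

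For (iv), I use that $\bar{\mathbf{x}}$ is a finite set on which $G$ acts continuously, so the orbit map $G \to \bar{\mathbf{x}}$, $g \mapsto g\bar{x}_0$, is locally constant. On each open set where this map takes a fixed value $\bar{x} \in \bar{\mathbf{x}}$, the operation simplifies to $(g, \bar{u}) \mapsto \psi(\bar{x}, \bar{x}_0)\,g\bar{u}$, which is continuous as a composition of the continuous $G$-action on $F$ with the fixed linear isomorphism $\psi(\bar{x}, \bar{x}_0)$. Local continuity on a finite open cover yields global continuity.

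The only part requiring genuine care is the associativity calculation in step (iii); the rest is bookkeeping. The main thing to watch in (iii) is applying the equivariance and transitivity formulas with the correct ordering of the three points $gh\bar{x}_0$, $g\bar{x}_0$, $\bar{x}_0$, since the non-commutativity of composition makes it easy to get factors in the wrong order.
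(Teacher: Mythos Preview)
Your proof is correct and follows essentially the same route as the paper. The paper's proof focuses only on the identity axiom and the associativity computation $(gh)\star_\psi\bar{u}=g\star_\psi(h\star_\psi\bar{u})$, carried out via the same two ingredients you use (equivariance to commute $g$ past $\psi(h\bar{x}_0,\bar{x}_0)$, then transitivity to collapse the two $\psi$ factors); your additional verification of $\C$-linearity and continuity is simply more explicit than the paper, which treats those as evident.
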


\begin{proof}
Since $\id \star_\psi ~ \bar{u} = \bar{u}$ for
any $\bar{u} \in F_{\bar{x}_0}$ by definition, we
only have to show that
\[
g \star_\psi ( h \star_\psi \bar{u} ) = ( gh )
\star_\psi \bar{u} \qquad \textit{ for any } g, h
\in G \text{ and } \bar{u} \in F_{\bar{x}_0}.
\]
By calculation, we have
\begin{alignat*}{2}
g \star_\psi ( h \star_\psi \bar{u} ) &= \psi(g
\bar{x}_0, \bar{x}_0) g \psi(h \bar{x}_0,
\bar{x}_0) h \bar{u}
& & \quad \text{by definition of } \star_\psi \\
&= \psi(g \bar{x}_0, \bar{x}_0) g \psi( g^{-1} g
h \bar{x}_0, g^{-1} g \bar{x}_0) g^{-1} g h
\bar{u}
& &   \\
&= \psi(g \bar{x}_0, \bar{x}_0) \psi( g h
\bar{x}_0,
g \bar{x}_0) g h \bar{u}
& & \quad \text{by equivariance of } \psi \\
&= \psi(g h \bar{x}_0, \bar{x}_0) g h \bar{u}
& & \quad \text{by transitivity of } \psi \\
&= ( gh ) \star_\psi \bar{u}. & & \quad \text{by
definition of } \star_\psi
\end{alignat*}
\end{proof}

Note that the representation space of $(
F_{\bar{x}_0}, \star_\psi )$ does not change, and
that its $G$-representation structure varies
continuously with $\psi.$ In Lemma \ref{lemma:
precise expression}, we will see that $(
F_{\bar{x}_0}, \star_\psi )$ replaces $F/\psi.$
If definition of $F/\psi$ is changed, definition
of $p_\psi$ should be also changed so as to be in
accordance with $( F_{\bar{x}_0}, \star_\psi ).$
The following $p_\psi^\prime$ is a candidate for
a new definition of $p_\psi.$

\begin{lemma} \label{lemma: become equivariant fiberwise
isomorphism}
For any $\psi \in \Psi_F,$ the map
\[
p_\psi^\prime : F \rightarrow ( F_{\bar{x}_0},
\star_\psi ), \quad \bar{u} \mapsto \psi(\bar{x},
\bar{x}_0) \bar{u} \qquad \text{ for } \bar{u}
\in F_{\bar{x}}
\]
is an equivariant fiberwise isomorphism.
\end{lemma}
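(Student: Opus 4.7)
The plan is to verify the two properties in the definition of an equivariant fiberwise isomorphism separately: fiberwise linear isomorphism, and equivariance with respect to the $G$-action on $F$ and the action $\star_\psi$ on $F_{\bar{x}_0}$.

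First, the fiberwise isomorphism property is immediate from the definition of $\Psi_F$. The restriction of $p_\psi^\prime$ to the fiber $F_{\bar{x}}$ is exactly the map $\psi(\bar{x},\bar{x}_0)$, which lies in $\iso(F_{\bar{x}}, F_{\bar{x}_0})$ by definition of a pointwise clutching map, so it is a linear isomorphism onto the target vector space $F_{\bar{x}_0}$. This step is routine and will be dispatched in one sentence.

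The substantive content is equivariance: for $\bar{u}\in F_{\bar{x}}$ and $g\in G$, we must show
\[
p_\psi^\prime(g\bar{u}) \;=\; g \star_\psi p_\psi^\prime(\bar{u}).
\]
Unwinding definitions, the left side equals $\psi(g\bar{x},\bar{x}_0)\,g\bar{u}$, and the right side equals $\psi(g\bar{x}_0,\bar{x}_0)\,g\,\psi(\bar{x},\bar{x}_0)\bar{u}$. The plan is to rewrite the right-hand side using the two structural properties of $\psi$ guaranteed by $\psi\in\Psi_F$. Namely, I apply equivariance $(g\cdot\psi = \psi)$ in the form $g\,\psi(\bar{x},\bar{x}_0) = \psi(g\bar{x},g\bar{x}_0)\,g$ to get
\[
\psi(g\bar{x}_0,\bar{x}_0)\,g\,\psi(\bar{x},\bar{x}_0)\bar{u} \;=\; \psi(g\bar{x}_0,\bar{x}_0)\,\psi(g\bar{x},g\bar{x}_0)\,g\bar{u},
\]
and then apply transitivity of $\psi$ at the triple $(g\bar{x},g\bar{x}_0,\bar{x}_0)$ to collapse $\psi(g\bar{x}_0,\bar{x}_0)\circ\psi(g\bar{x},g\bar{x}_0)$ into $\psi(g\bar{x},\bar{x}_0)$, yielding the left-hand side.

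No step looks like a real obstacle; the only mild care needed is orienting the cocycle identity $g\cdot\psi=\psi$ into a form in which $g$ can be pushed past $\psi(\bar{x},\bar{x}_0)$, and keeping track of which arguments of $\psi$ are being translated by $g$ versus fixed as $\bar{x}_0$. Once those bookkeeping choices are made, the calculation is a two-line chain of equalities invoking equivariance once and transitivity once, matching the pattern already used in the proof of Lemma~\ref{lemma: become a representation}.
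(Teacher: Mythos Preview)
Your proposal is correct and follows essentially the same approach as the paper: both dispatch the fiberwise isomorphism property in one line and then verify equivariance by a short chain of equalities invoking equivariance of $\psi$ once and transitivity of $\psi$ once. The only cosmetic difference is that the paper starts from $p_\psi^\prime(g\bar{u})$ and works toward $g\star_\psi p_\psi^\prime(\bar{u})$ (applying transitivity first, then equivariance), whereas you start from the right-hand side and reverse the order; the two computations are the same argument read in opposite directions.
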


\begin{proof}
Since it is easy to show that $p_\psi^\prime$ is
a fiberwise isomorphism, we only have to show
that $p_\psi^\prime$ is equivariant, i.e.
\[
p_\psi^\prime (g \bar{u}) = g \star_\psi
p_\psi^\prime (\bar{u})
\]
for any $g \in G$ and $\bar{u} \in F_{\bar{x}}.$
By calculation, we have
\begin{alignat*}{2}
p_\psi^\prime (g \bar{u}) &~=~ \psi(g \bar{x},
\bar{x}_0) g \bar{u}
& & \qquad \text{by definition of } p_\psi^\prime \\
&~=~ \psi( g \bar{x}_0, \bar{x}_0) \psi( g
\bar{x}, g \bar{x}_0) g \bar{u}
& & \qquad \text{by transitivity of } \psi \\
&~=~ \psi(g \bar{x}_0, \bar{x}_0) g \psi(g^{-1} g
\bar{x}, g^{-1} g \bar{x}_0) g^{-1} g \bar{u}
& & \qquad \text{by equivariance of } \psi \\
&~=~ \psi( g \bar{x}_0, \bar{x}_0) g \psi(
\bar{x}, \bar{x}_0) \bar{u}
& &  \\
&~=~ g \star_\psi \psi( \bar{x}, \bar{x}_0)
\bar{u}
& & \qquad \text{by definition of } \star_\psi \\
&~=~ g \star_\psi p_\psi^\prime ( \bar{u} ) & &
\qquad \text{by definition of } p_\psi^\prime
\end{alignat*}
for $g \in G$ and $\bar{u} \in F_{\bar{x}}.$
Therefore, we obtain a proof.
\end{proof}

Easily, the codomain $F_{\bar{x}_0}$ of
$p_\psi^\prime$ does not change, and the map
$p_\psi^\prime$ varies continuously with $\psi.$
Now, we can replace $F/\psi$ and $p_\psi$ with $(
F_{\bar{x}_0}, \star_\psi )$ and $p_\psi^\prime,$
respectively.

\begin{lemma} \label{lemma: precise expression}
For any $\psi \in \Psi_F,$ the following diagram
commutes for some $G$-isomorphism from $F/\psi$
to $( F_{\bar{x}_0}, \star_\psi )$:
\begin{equation*}
\SelectTips{cm}{} \xymatrix{ F
\ar[r]^-{p_\psi^\prime} \ar[d]_-{p_\psi} & (
F_{\bar{x}_0}, \star_\psi ) \\
F \big/ \psi \ar@{->}[ur]_-{\cong} }.
\end{equation*}
\end{lemma}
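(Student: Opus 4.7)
The plan is to reduce this lemma to Lemma \ref{lemma: define clutching} applied to the equivariant fiberwise isomorphism $p_\psi^\prime$. Concretely, I want to recognize $p_\psi^\prime$ as the kind of map for which Lemma \ref{lemma: define clutching} produces a unique pointwise clutching map and a unique $G$-isomorphism making the desired triangle commute, and then to verify that the pointwise clutching map produced by this procedure coincides with the original $\psi$.

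First I would observe that $p_\psi^\prime \in \fiso_G (F, (F_{\bar{x}_0}, \star_\psi))$ by Lemma \ref{lemma: become equivariant fiberwise isomorphism}, and hence, by Lemma \ref{lemma: become extension}, the representation $(F_{\bar{x}_0}, \star_\psi)$ belongs to $\ext F$. This puts us in the hypothesis of Lemma \ref{lemma: define clutching} with $p=p_\psi^\prime$ and $w=(F_{\bar{x}_0}, \star_\psi)$, which yields a unique $\psi^\prime \in \Psi_F((F_{\bar{x}_0}, \star_\psi))$ together with some $G$-isomorphism $\alpha : F/\psi^\prime \to (F_{\bar{x}_0}, \star_\psi)$ such that $p_\psi^\prime = \alpha \circ p_{\psi^\prime}$.

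The remaining task is to check that $\psi^\prime = \psi$; once this is done, $\alpha$ is precisely the $G$-isomorphism required by the lemma. Using the explicit formula $(**)$ from the proof of Lemma \ref{lemma: define clutching}, together with $p_\psi^\prime|_{F_{\bar{x}}} = \psi(\bar{x}, \bar{x}_0)$, one computes
\[
\psi^\prime(\bar{x}, \bar{x}^\prime) = \big( p_\psi^\prime|_{F_{\bar{x}^\prime}} \big)^{-1} \circ \big( p_\psi^\prime|_{F_{\bar{x}}} \big) = \psi(\bar{x}^\prime, \bar{x}_0)^{-1} \circ \psi(\bar{x}, \bar{x}_0) = \psi(\bar{x}_0, \bar{x}^\prime) \circ \psi(\bar{x}, \bar{x}_0),
\]
where the last equality uses the symmetry axiom. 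Transitivity of $\psi$ then collapses this composition to $\psi(\bar{x}, \bar{x}^\prime)$, so $\psi^\prime = \psi$.

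There is no real obstacle here: the whole argument is bookkeeping that identifies $(F_{\bar{x}_0}, \star_\psi)$ as a representation extension realised by the equivariant fiberwise isomorphism $p_\psi^\prime$, and then invokes the uniqueness clause of Lemma \ref{lemma: define clutching}. The only step that requires a small computation is showing $\psi^\prime = \psi$, and that is a one-line application of the symmetry and transitivity axioms satisfied by a pointwise clutching map.
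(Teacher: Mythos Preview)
Your proof is correct and follows essentially the same approach as the paper: both recognize $p_\psi^\prime$ as an equivariant fiberwise isomorphism via Lemma~\ref{lemma: become equivariant fiberwise isomorphism}, then invoke Lemma~\ref{lemma: define clutching} and verify that the resulting clutching map is $\psi$ itself (which the paper phrases as ``$\Cl(p_\psi^\prime)=\psi$''). Your version simply spells out the computation of $\psi^\prime=\psi$ using the formula $(**)$ and the symmetry/transitivity axioms, whereas the paper leaves this as ``we can check.''
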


\begin{proof}
By Lemma \ref{lemma: become equivariant fiberwise
isomorphism}, $p_\psi^\prime$ is an equivariant
fiberwise isomorphism from $F$ to the
$G$-representation $( F_{\bar{x}_0}, \star_\psi
).$ By definition of $\Cl,$ we can check
$\Cl(p_\psi^\prime)=\psi.$ By Lemma \ref{lemma:
define clutching}, we obtain the diagram.
\end{proof}

By Lemma \ref{lemma: precise expression}, we can
henceforward put
\[
F/\psi := ( F_{\bar{x}_0}, \star_\psi ) \qquad
\text{ and } \qquad p_\psi := p_\psi^\prime.
\]
We can observe that previous results stated by
using previous $F/\psi$ and $p_\psi$ still hold
for new $F/\psi$ and $p_\psi.$ Now, we return to
our arguments below Remark \ref{remark: inverse
of cl} to show that $\cl^{-1}$ is continuous.
Since we constructed $p_\psi$ in Lemma
\ref{lemma: become equivariant fiberwise
isomorphism} which varies continuously with $\psi
\in \Psi_F (w),$ the remaining is to construct
$\alpha_\psi$ in $\iso_G ( F / \psi, w )$ which
varies continuously with $\psi \in \Psi_F (w).$

\begin{lemma} \label{lemma: averaging}
For arbitrary $\psi_0$ in $\Psi_F,$ put $w =
F/\psi_0.$ Then, there exists a continuous map
\[
\boldsymbol{\alpha} : U \subset \Psi_F(w)
\rightarrow \iso (F_{\bar{x}_0})
\]
for a small neighborhood $U$ of $\psi_0$ such
that $\boldsymbol{\alpha} (\psi)$ is contained in
$\iso_G \big( F/\psi, w \big)$ for each $\psi \in
U.$
\end{lemma}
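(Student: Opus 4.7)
The plan is to construct $\boldsymbol{\alpha}(\psi)$ by the classical averaging trick, starting from the identity map and symmetrizing it with respect to the two representation structures $\star_\psi$ and $\star_{\psi_0}$ on the common vector space $F_{\bar{x}_0}$. Concretely, I would define
\[
\boldsymbol{\alpha}(\psi)(\bar{u}) ~:=~ \int_G ~ g^{-1} \star_{\psi_0} \big( g \star_\psi \bar{u} \big) ~ dg \qquad \text{for } \bar{u} \in F_{\bar{x}_0},
\]
using the normalized Haar measure on the compact group $G$. Here the integrand takes values in the finite-dimensional vector space $F_{\bar{x}_0}$, so the integral is a well-defined element of $\End(F_{\bar{x}_0})$.

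Next I would verify that $\boldsymbol{\alpha}(\psi)$ intertwines $\star_\psi$ with $\star_{\psi_0}$ by the usual change of variable $g \mapsto gh^{-1}$ in the Haar integral, which gives
\[
\boldsymbol{\alpha}(\psi)\big( h \star_\psi \bar{u} \big) ~=~ \int_G g^{-1} \star_{\psi_0} \big( (gh) \star_\psi \bar{u} \big) \, dg ~=~ h \star_{\psi_0} \boldsymbol{\alpha}(\psi)(\bar{u}).
\]
Thus $\boldsymbol{\alpha}(\psi) \in \Hom_G \big( F/\psi, w \big)$ for every $\psi \in \Psi_F(w)$. At $\psi = \psi_0$, the two actions coincide and the integrand reduces to $\bar{u}$, so $\boldsymbol{\alpha}(\psi_0) = \id_{F_{\bar{x}_0}}$, which is in particular an isomorphism.

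To pass from $\psi_0$ to a neighborhood, I would invoke two continuity facts: first, the assignment $\psi \mapsto \star_\psi$ is continuous (essentially by Definition \ref{definition: star operation}, since $\star_\psi$ is given by applying $\psi$ which varies continuously in the product topology on (\ref{equation: product})); second, Haar integration of a continuously parametrized family of continuous functions on the compact group $G$ yields a continuous family. Combining these, $\psi \mapsto \boldsymbol{\alpha}(\psi)$ is continuous from $\Psi_F(w)$ to $\End(F_{\bar{x}_0})$. Since $\iso(F_{\bar{x}_0})$ is an open subset of $\End(F_{\bar{x}_0})$ and $\boldsymbol{\alpha}(\psi_0) = \id$ lies in it, there is an open neighborhood $U$ of $\psi_0$ in $\Psi_F(w)$ on which $\boldsymbol{\alpha}(\psi)$ remains invertible, hence lies in $\iso_G(F/\psi, w)$ as required.

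The only nontrivial point I anticipate is checking the continuity of $\psi \mapsto \boldsymbol{\alpha}(\psi)$ carefully: one needs that $(g, \psi) \mapsto g^{-1} \star_{\psi_0} (g \star_\psi \bar{u})$ is jointly continuous on $G \times \Psi_F(w)$ for each $\bar{u}$, which follows from the definition of $\star_\psi$ together with continuity of the $G$-action on the finite set $\bar{\mathbf{x}}$ and on the finite-dimensional vector spaces $F_{\bar{x}}$. Everything else — equivariance via a substitution, $\boldsymbol{\alpha}(\psi_0) = \id$, and openness of the invertible locus — is routine once the averaging formula is in place.
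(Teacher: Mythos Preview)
Your proposal is correct and is essentially the same as the paper's proof: both construct $\boldsymbol{\alpha}(\psi)$ by Haar-averaging the identity against the two $G$-actions $\star_\psi$ and $\star_{\psi_0}$ on $F_{\bar{x}_0}$, verify the intertwining property by translation invariance, note that $\boldsymbol{\alpha}(\psi_0)$ is a scalar multiple of the identity, and then use continuity to get invertibility on a neighborhood. The only cosmetic difference is that the paper writes the integrand as $\rho_{\psi_0}(g)\,\rho_\psi(g)^{-1}$ (where $\rho_\psi(g)\bar u = g\star_\psi\bar u$), which is your integrand after the substitution $g\mapsto g^{-1}$.
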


\begin{proof}
Note that $w \in \ext F$ and $\psi_0 \in \Psi_F
(w)$ by definition of $\gl.$ For $\psi$ in
$\Psi_F (w),$ let $\rho_\psi : G \rightarrow
\iso( F_{\bar{x}_0} )$ be the group isomorphism
corresponding to the $G$-representation $F/\psi =
( F_{\bar{x}_0}, \star_\psi ),$ i.e. $\rho_\psi
(g) \bar{u} = g \star_\psi \bar{u}$ for $g \in G$
and $\bar{u} \in F_{\bar{x}_0}.$ Note that
$\rho_\psi$ varies continuously with $\psi$
because the $G$-representation structure of
$F/\psi$ varies continuously with $\psi.$ By
using $\rho_\psi,$ we define a map
$\boldsymbol{\alpha}$ as follows:
\[
\boldsymbol{\alpha} : \Psi_F(w) \rightarrow \End
(F_{\bar{x}_0}), \quad \psi \mapsto \int_G
\rho_{\psi_0} (g) \rho_\psi (g)^{-1} ~ dg,
\]
where $\End ( \cdot )$ is the set of
inequivariant endomorphisms of a vector space and
$dg$ is a Haar measure. Then, it is trivial that
$\boldsymbol{\alpha} (\psi_0)= \mathrm{constant}
\cdot \id_{F_{\bar{x}_0}}$ by definition. And, it
is easy that $\boldsymbol{\alpha}$ is continuous
because $\rho_\psi$ varies continuously with
$\psi.$ So, the remaining is to show that
$\boldsymbol{\alpha} (\psi)$ is contained in
$\iso_G \big( F/\psi, w \big)$ for each $\psi$ in
a small neighborhood $U$ of $\psi_0.$ First, we
show that $\boldsymbol{\alpha} (\psi)$ is
contained in $\End_G \big( F/\psi, w \big)$ for
each $\psi \in \Psi_F (w),$ i.e.
\[
\rho_{\psi_0} (g^\prime) \boldsymbol{\alpha}
(\psi) \rho_\psi (g^\prime)^{-1} =
\boldsymbol{\alpha} (\psi)
\]
for each $\psi \in \Psi_F (w)$ and $g^\prime \in
G$ because group isomorphisms corresponding to
$F/\psi$ and $w$ are $\rho_{\psi}$ and
$\rho_{\psi_0},$ respectively. By calculation, we
have
\begin{alignat*}{2}
\rho_{\psi_0} (g^\prime) \boldsymbol{\alpha}
(\psi) \rho_\psi (g^\prime)^{-1} &~=~ \int_G
\rho_{\psi_0} (g^\prime) \rho_{\psi_0} (g)
\rho_\psi (g)^{-1} \rho_\psi (g^\prime)^{-1} ~ dg
\\
&~=~ \int_G \rho_{\psi_0} (g^\prime g) \rho_\psi
(g^\prime g)^{-1} ~ dg
\\
&~=~ \int_G \rho_{\psi_0} (g) \rho_\psi (g)^{-1} ~ dg
\quad \text{by invariance of Haar measure} \\
&~=~ \boldsymbol{\alpha} (\psi).
\end{alignat*}
So, we have shown that $\boldsymbol{\alpha}
(\psi)$ is contained in $\End_G \big( F/\psi, w
\big)$ for each $\psi \in \Psi_F (w).$
Furthermore since $\boldsymbol{\alpha} (\psi_0)=
\mathrm{constant} \cdot \id_{F_{\bar{x}_0}},$ the
endomorphism $\boldsymbol{\alpha} (\psi)$ is an
isomorphism for each $\psi$ in a small
neighborhood $U$ of $\psi_0.$ So, the restriction
of $\boldsymbol{\alpha}$ to $U$ is a wanted map.
\end{proof}

We are ready to prove that $\cl : \fiso_G (F, w)
/ \iso_G (w) \longrightarrow \Psi_F (w)$ is a
homeomorphism for $w \in \ext F.$

\begin{proposition} \label{proposition: cl is homeomorphic}
For each $w \in \ext F,$ the map $\cl$ is
homeomorphic. And, $\Psi_F (w)$ is nonempty and
path-connected.
\end{proposition}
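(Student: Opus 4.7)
The plan is to invoke the pieces already assembled. Bijectivity of $\cl$ is Lemma \ref{lemma: cl is bijective}, and continuity of $\cl$ was observed right after its definition. Once $\cl^{-1}$ is shown continuous, the ``moreover'' clause is free: by Corollary \ref{corollary: path-connected} the space $\fiso_G(F,w)$ is nonempty and path-connected, so its quotient $\fiso_G(F,w)/\iso_G(w)$ is also nonempty and path-connected, and then $\Psi_F(w)$ inherits both properties via the homeomorphism $\cl$.

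So the only real work is continuity of $\cl^{-1}$. Using the reformulation provided by Lemma \ref{lemma: precise expression}, I henceforth take $F/\psi = (F_{\bar{x}_0}, \star_\psi)$ and $p_\psi = p_\psi^\prime$; the advantage is that the underlying vector space $F_{\bar{x}_0}$ is now fixed, the $G$-representation structure $\star_\psi$ varies continuously in $\psi$, and the fiberwise isomorphism $p_\psi : F \to F_{\bar{x}_0}$, $\bar{u} \mapsto \psi(\bar{x},\bar{x}_0)\bar{u}$, also varies continuously in $\psi$. According to Remark \ref{remark: inverse of cl}, $\cl^{-1}(\psi) = [\alpha_\psi \circ p_\psi]$ for any choice $\alpha_\psi \in \iso_G(F/\psi, w)$, so it suffices to produce, near any given $\psi_0 \in \Psi_F(w)$, a continuous choice $\psi \mapsto \alpha_\psi$.

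This continuous choice is exactly what Lemma \ref{lemma: averaging} delivers: for $w = F/\psi_0$ there is a neighborhood $U$ of $\psi_0$ in $\Psi_F(w)$ and a continuous map $\boldsymbol{\alpha} : U \to \iso(F_{\bar{x}_0})$ with $\boldsymbol{\alpha}(\psi) \in \iso_G(F/\psi, w)$. Composing, the assignment $\psi \mapsto \boldsymbol{\alpha}(\psi) \circ p_\psi$ is a continuous map $U \to \fiso_G(F,w)$, and post-composition with the quotient map $\fiso_G(F,w) \to \fiso_G(F,w)/\iso_G(w)$ gives a continuous map $U \to \fiso_G(F,w)/\iso_G(w)$ which, by Remark \ref{remark: inverse of cl}, agrees with $\cl^{-1}|_U$. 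Different choices of $\alpha_\psi$ produce the same class in the quotient, so these local continuous lifts automatically patch, and $\cl^{-1}$ is continuous on all of $\Psi_F(w)$.

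I expect the main subtlety to be purely notational: keeping straight that one is now using the ``new'' $F/\psi$ and $p_\psi$ from Lemma \ref{lemma: precise expression} (for which continuous dependence on $\psi$ makes sense) rather than the original quotient construction, and verifying that the earlier results — in particular the formula $\cl^{-1}(\psi) = [\alpha_\psi \circ p_\psi]$ from Remark \ref{remark: inverse of cl} and the definition of $\Cl$ via the commutative triangle — transport to these new representatives. All of this is formal, and the genuine analytic input (a Haar-averaging argument giving a continuous family of intertwiners near $\psi_0$) is already isolated in Lemma \ref{lemma: averaging}, so no further estimates are required.
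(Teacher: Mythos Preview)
Your proposal is correct and follows essentially the same route as the paper: invoke bijectivity and continuity of $\cl$ from the earlier lemmas, prove continuity of $\cl^{-1}$ locally at each $\psi_0$ by combining the continuously varying $p_\psi$ (via the redefinition $F/\psi = (F_{\bar{x}_0},\star_\psi)$) with the continuous intertwiner family $\boldsymbol{\alpha}$ from Lemma~\ref{lemma: averaging}, and then read off nonemptiness and path-connectedness of $\Psi_F(w)$ from Corollary~\ref{corollary: path-connected} via the homeomorphism. The only minor wrinkle is that Lemma~\ref{lemma: averaging} is stated literally for $w = F/\psi_0$, whereas here $w$ is an arbitrary representative with $F/\psi_0 \cong w$; both you and the paper implicitly absorb this by composing with a fixed $G$-isomorphism, which is harmless.
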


\begin{proof}
As we have seen before, $\cl$ is bijective and
continuous. So, we only have to show that
$\cl^{-1}$ is continuous. By Remark \ref{remark:
inverse of cl}, the inverse $\cl^{-1}$ is as
follows:
\begin{equation*}
\cl^{-1} : \Psi_F (w) \longrightarrow \fiso_G (F,
w) / \iso_G (w), \quad \psi \rightarrow
[\alpha_\psi \circ p_\psi]
\end{equation*}
for any $G$-isomorphism $\alpha_\psi$ in $\iso_G
( F / \psi, w ).$ To show that $\cl^{-1}$ is
continuous, we will show that $\cl^{-1}$ is
continuous at an arbitrary $\psi_0 \in \Psi_F
(w).$ Instead of $\alpha_\psi,$ we consider the
continuous map $\boldsymbol{\alpha}$ of Lemma
\ref{lemma: averaging} defined on a small
neighborhood $U$ of $\psi_0$ whose image
$\boldsymbol{\alpha} (\psi)$ is contained in
$\iso_G ( F / \psi, w )$ for each $\psi \in U.$
Then, the inverse $\cl^{-1}$ restricted to $U$ is
expressed as follows:
\begin{equation*}
\cl^{-1}|_U : U \longrightarrow \fiso_G (F, w) /
\iso_G (w), \quad \psi \rightarrow
[\boldsymbol{\alpha}(\psi) \circ p_\psi].
\end{equation*}
Since both $\boldsymbol{\alpha}(\psi)$ and
$p_\psi$ vary continuously with $\psi,$ the map
$\cl^{-1}$ on $U$ is continuous. In particular,
the map $\cl^{-1}$ is continuous at $\psi_0.$
Therefore, we obtain a proof for the first
statement.

Now, we prove the second statement. By Corollary
\ref{corollary: path-connected}, $\fiso_G (F, w)$
is nonempty and path-connected. So, the quotient
space $\fiso_G (F, w) / \iso_G (w)$ is nonempty
and path-connected. Since the map $\cl$ is
homeomorphic by the first statement, we obtain
that $\Psi_F (w)$ is nonempty and path-connected.
Therefore, we obtain a proof.
\end{proof}

Now, we can prove Theorem \ref{theorem:
bijectivity with extensions} and Theorem
\ref{theorem: homeo type of Psi}.

\begin{proof}[Proof of Theorem
\ref{theorem: bijectivity with extensions}] By
arguments below Lemma \ref{lemma: become a
representation}, the representation space of
$F/\psi$ is constant regardless of $\psi,$ and
the $G$-representation structure of $F/\psi$
varies continuously with $\psi.$ So by definition
of $\gl,$ the map $\gl$ is continuous. By
Proposition \ref{proposition: cl is
homeomorphic}, each preimage of $\gl$ is
nonempty, i.e. $\gl$ is surjective. Again by
Proposition \ref{proposition: cl is
homeomorphic}, each preimage of $\gl$ is
path-connected, i.e. the map $[\gl]$ is
injective. Therefore, we obtain a proof.
\end{proof}

\begin{proof}[Proof of Theorem \ref{theorem: homeo type of Psi}]
Let $\bar{\mathbf{s}} \subset \bar{\mathbf{x}}$
be a subset containing exactly one element in
each orbit of $\bar{\mathbf{x}}.$ By Proposition
\ref{proposition: homeo type of fiso}, we have
the following homeomorphism:
\[
\imath : \fiso_G (F, w) ~ \longrightarrow ~
\prod_{{\bar{s}} \in \bar{\mathbf{s}}} ~
\iso_{G_{\bar{s}}} ( F_{{\bar{s}}}, w ), \quad p
~ \longmapsto ~ \prod_{{\bar{s}} \in
\bar{\mathbf{s}}} ~ p|_{F_{{\bar{s}}}}.
\]
If we define an $\iso_G (w)$-action on the
product $\prod_{{\bar{s}} \in \bar{\mathbf{s}}} ~
\iso_{G_{\bar{s}}} ( F_{{\bar{s}}}, w )$ as
follows:
\begin{equation*}
\alpha \cdot \prod_{\bar{s} \in \bar{\mathbf{s}}}
~ p_{\bar{s}} := \prod_{\bar{s} \in
\bar{\mathbf{s}}} ~ \alpha \circ p_{\bar{s}}
\end{equation*}
for $\alpha \in \iso_G (w)$ and $p_{\bar{s}} \in
\iso_{G_{\bar{s}}} ( F_{{\bar{s}}}, w ),$ then
$\imath$ is $\iso_G (w)$-homeomorphic. So,
$\Psi_F (w)$ is homeomorphic to
\[
\Bigg( \prod_{{\bar{s}} \in \bar{\mathbf{s}}} ~
\iso_{G_{\bar{s}}} ( F_{{\bar{s}}}, w ) \Bigg)
\bigg/ \iso_G (w)
\]
because $\Psi_F (w)$ is homeomorphic to $\fiso_G
(F, w) / \iso_G (w)$ by Proposition
\ref{proposition: cl is homeomorphic}. Since $w
\in \ext F$ and hence we have
$\res_{G_{\bar{s}}}^G w \cong F_{{\bar{s}}}$ by
definition, the topological space
$\iso_{G_{\bar{s}}} ( F_{{\bar{s}}}, w )$ is
$\iso_G ( w )$-homeomorphic to
$\iso_{G_{\bar{s}}} ( w ).$ Therefore, $\Psi_F
(w)$ is homeomorphic to
\[
\Bigg( \prod_{{\bar{s}} \in \bar{\mathbf{s}}} ~
\iso_{G_{\bar{s}}} ( w ) \Bigg) \bigg/ \iso_G
(w),
\]
and we obtain a proof.
\end{proof}

Applying Theorem \ref{theorem: homeo type of Psi}
to two extreme cases, we obtain the following
corollary:
\begin{corollary} \label{corollary: two extreme
cases} For each $w \in \ext F,$ the set $\Psi_F
(w)$ is homeomorphic to
\[
\left\{
\begin{array}{ll}
\iso_{G_{{\bar{x}}}} ( w ) \big/ \iso_G ( w )
\text{ for any } \bar{x} \in \bar{\mathbf{x}} &
\text{ if the $G$-action on
$\bar{\mathbf{x}}$ is transitive,} \\
\iso_G (w)^{|\bar{\mathbf{x}}|-1} & \text{ if the
$G$-action on $\bar{\mathbf{x}}$ is trivial.}
\end{array}
\right.
\]
\end{corollary}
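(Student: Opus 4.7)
The plan is to apply Theorem \ref{theorem: homeo type of Psi} directly and simplify the quotient space in each of the two extreme cases. In both cases, the work reduces to identifying the set $\bar{\mathbf{s}}$ of orbit representatives, the stabilizers $G_{\bar{s}}$, and then unwinding the diagonal $\iso_G(w)$-action on the product.

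For the transitive case, the orbit set is a singleton, so we may take $\bar{\mathbf{s}} = \{ \bar{x} \}$ for any choice of $\bar{x} \in \bar{\mathbf{x}}$. Theorem \ref{theorem: homeo type of Psi} then gives immediately
\[
\Psi_F(w) ~\cong~ \iso_{G_{\bar{x}}}(w) \big/ \iso_G(w),
\]
since the product over $\bar{\mathbf{s}}$ has only one factor. There is essentially nothing to check beyond this direct substitution.

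For the trivial case, every point is its own orbit, so $\bar{\mathbf{s}} = \bar{\mathbf{x}}$ and $G_{\bar{s}} = G$ for every $\bar{s}$. Thus Theorem \ref{theorem: homeo type of Psi} yields
\[
\Psi_F(w) ~\cong~ \iso_G(w)^{|\bar{\mathbf{x}}|} \big/ \iso_G(w),
\]
where the quotient is by the diagonal action $\alpha \cdot (p_1, \dots, p_n) = (\alpha \circ p_1, \dots, \alpha \circ p_n)$. After labelling the factors by $\bar{\mathbf{x}} = \{\bar{x}_1, \dots, \bar{x}_n\}$, I would define the continuous map
\[
\iso_G(w)^n \longrightarrow \iso_G(w)^{n-1}, \qquad (p_1, \dots, p_n) \longmapsto (p_1^{-1} \circ p_2, \dots, p_1^{-1} \circ p_n),
\]
check that its fibers are exactly the diagonal $\iso_G(w)$-orbits, and conclude that it descends to a continuous bijection from the quotient to $\iso_G(w)^{n-1}$. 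A continuous inverse is given by $(q_2, \dots, q_n) \mapsto [(\id, q_2, \dots, q_n)]$, so this bijection is a homeomorphism.

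The only mildly nontrivial step is the identification of the quotient in the trivial case, but since the diagonal action of $\iso_G(w)$ on $\iso_G(w)^n$ is free and admits the obvious global section $\{(\id, q_2, \dots, q_n)\}$, no serious obstacle arises. The rest is bookkeeping with Theorem \ref{theorem: homeo type of Psi}.
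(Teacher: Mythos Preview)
Your proposal is correct and follows essentially the same approach as the paper: both cases reduce directly to Theorem \ref{theorem: homeo type of Psi}, and for the trivial case both you and the paper identify $\iso_G(w)^{|\bar{\mathbf{x}}|}/\iso_G(w)$ with $\iso_G(w)^{|\bar{\mathbf{x}}|-1}$ via the section that inserts the identity in one coordinate. The only cosmetic difference is that the paper writes the map from $\iso_G(w)^{N-1}$ into the quotient and appeals to local diffeomorphism, whereas you construct the projection first and exhibit an explicit continuous inverse; your version is slightly more elementary but the idea is the same.
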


\begin{proof}
The transitive case is trivial by Theorem
\ref{theorem: homeo type of Psi}.

Assume that the $G$-action on $\bar{\mathbf{x}}$
is trivial. Denote $|\bar{\mathbf{x}}|$ by $N.$
By Theorem \ref{theorem: homeo type of Psi},
$\Psi_F (w)$ is homeomorphic to $\iso_G (w)^N /
\iso_G (w).$ We consider the map
\[
j : \iso_G (w)^{N-1} \rightarrow \iso_G (w)^N /
\iso_G (w), \quad (\alpha_1, \cdots,
\alpha_{N-1}) \mapsto [(\alpha_1, \cdots,
\alpha_{N-1}, \id)].
\]
It is easy that $j$ is bijective. And if we consider
smooth structures of two spaces, it is easy that $j$
is locally diffeomorphic. Therefore, $j$ is a
homeomorphism.
\end{proof}

\bigskip

\section{The Fundamental group of the set
of equivariant pointwise clutching maps}
\label{section: fundamental group}

In this section, we investigate the fundamental
group of $\Psi_F (w)$ for $w \in \ext F$ when the
$G$-action on $\bar{\mathbf{x}}$ is transitive,
and prove Proposition \ref{proposition: simply
connected} and Corollary \ref{corollary: simply
connected}. When the $G$-action on
$\bar{\mathbf{x}}$ is transitive, $\Psi_F (w)$ is
homeomorphic to $\iso_{G_{{\bar{x}}}} ( w ) \big/
\iso_G ( w )$ for any $\bar{x} \in
\bar{\mathbf{x}}$ by Corollary \ref{corollary:
two extreme cases}. Hence to calculate $\pi_1
\big( \Psi_F (w) \big),$ we need understand the
following long exact sequence of homotopy groups
for the fibration $\iso_{G_{\bar{x}}} (w)
\rightarrow \iso_{G_{\bar{x}}} (w) / \iso_G (w)$
with the fiber $\iso_G (w)$ :
\begin{align} \label{equation: exact sequence of
fibration} & \pi_1 \Big( \iso_G (w) \Big)
\overset{\pi_1(k)}{\longrightarrow} \pi_1 \Big(
\iso_{G_{\bar{x}}} (w) \Big) \longrightarrow
\pi_1 \Big( \iso_{G_{\bar{x}}} (w) /
\iso_G (w) \Big) \\
\notag & \longrightarrow \pi_0 \Big( \iso_G (w)
\Big) \longrightarrow \pi_0 \Big(
\iso_{G_{\bar{x}}} (w) \Big),
\end{align}
where $\pi_1(k)$ is a map induced by the
inclusion $k: \iso_G (w) \rightarrow
\iso_{G_{\bar{x}}} (w).$ Since $\pi_0$'s in the
long exact sequence are trivial, we only have to
understand $\pi_1 (k)$ to calculate $\pi_1 \big(
\iso_{G_{\bar{x}}} (w) / \iso_G (w) \big),$ see
arguments below Example \ref{example: group
isomorphism} for triviality of $\pi_0$'s. For
notational simplicity, we generalize the
situation slightly. Instead of $\pi_1(k)$ in
(\ref{equation: exact sequence of fibration}), we
will investigate
\begin{equation} \label{equation: induced map}
\pi_1 \Big( \iso_G (w) \Big) \longrightarrow \pi_1
\Big( \iso_K (w) \Big)
\end{equation}
for a complex $G$-representation $w$ and a closed
subgroup $K \subset G,$ which is a map induced by
the inclusion $\iso_G (w) \hookrightarrow \iso_K
(w).$ So in this section, we often refer to the
map (\ref{equation: induced map}).

Let $w$ be a complex $G$-representation, and $K$ be a
closed subgroup of $G.$ The $G$-representation $w$ is
decomposed into a direct sum of irreducible
$G$-representations as follows:
\begin{equation} \label{equation: irr
decomposition}
w \cong l_1 \cdot U_1 \oplus \cdots \oplus l_m
\cdot U_m
\end{equation}
for some natural numbers $l_i$'s and irreducible
$G$-representations $U_i$'s such that $U_i
\not\cong U_{i^\prime}$ whenever $i \ne
i^\prime.$ Then by Schur's lemma, we have a
homeomorphic group isomorphism
\begin{equation} \label{equation: group iso for
G} \iso_G ( w ) \cong \GL(l_1, \C) \times \cdots
\times \GL(l_m, \C),
\end{equation}
see \cite[p. 69]{BD} for Schur's lemma and see
\cite[Exercise 9 of p. 72]{BD} for the group
isomorphism. For explanation of the group
isomorphism (\ref{equation: group iso for G}), we
give an example.

\begin{example} \label{example: group isomorphism}
Assume that $m=2$ and $l_1=2,$ $l_2=3$ for the
decomposition (\ref{equation: irr
decomposition}), i.e.
\[
w \cong ( U_1 \oplus U_1 ) \oplus ( U_2 \oplus
U_2 \oplus U_2 ).
\]
If we express an element in $\GL(2, \C) \times
\GL(3, \C)$ of (\ref{equation: group iso for G})
as
\[
\left(
\begin{array}{c|c}
    A          & \mathbf{0}  \\
    \hline
    \mathbf{0} & B  \\
  \end{array}
\right)
\]
for $A \in \GL(2, \C)$ and $B \in \GL(3, \C),$
the element is corresponding to the following
element in $\iso_G(w)$ through the isomorphism
(\ref{equation: group iso for G}):
\[
\left( ~ \left(\begin{array}{c}
u_1  \\
u_2  \\
\end{array}\right), ~ \left(
\begin{array}{c}
v_1  \\
v_2  \\
v_3  \\
\end{array}
\right) ~ \right) ~ \longmapsto ~ \left( ~ A
\cdot \left(
\begin{array}{c}
u_1  \\
u_2  \\
\end{array}
\right) , ~ B \cdot \left(
\begin{array}{c}
v_1  \\
v_2  \\
v_3  \\
\end{array}
\right) ~ \right)
\]
for $\Big( ( u_1, u_2), ~ (v_1, v_2, v_3) \Big)
\in ( U_1 \oplus U_1 ) \oplus ( U_2 \oplus U_2
\oplus U_2 ),$ where $\mathbf{0}$ is a zero
matrix with a suitable size. \qed
\end{example}

Now, we digress briefly into the zeroth and first
homotopy groups of $\iso_K ( w )$ for a complex
$G$-representation $w$ and a closed subgroup $K
\subset G.$ First, the space $\iso_K ( w )$ is
homeomorphically group isomorphic to a product of
general linear groups over $\C$ in the exactly
same way with $\iso_G ( w ).$ It is well known
that general linear groups over $\C$ are
path-connected, and that their fundamental groups
are $\Z,$ more precisely the determinant map
$\det : \GL(l, \C) \rightarrow \C^*$ for $l \in
\N$ induces the group isomorphism
\[
\pi_1 (\det) : \pi_1 \big( \GL(l, \C) \big)
\rightarrow \pi_1 ( \C^* ) \cong \Z.
\]
So, the zeroth homotopy group of $\iso_K ( w )$
for any closed subgroup $K \subset G$ is trivial
as mentioned earlier. And, if $\iso_K ( w )$ is
homeomorphically group isomorphic to a product of
$m$ general linear groups over $\C,$ then the
fundamental group of $\iso_K ( w )$ is $\Z^m.$

We return to our investigation of the map
(\ref{equation: induced map}) induced by the
inclusion. In this time, we will apply Schur's
lemma to the restricted $K$-representation
$\res_K^G w$ for a closed subgroup $K$ of $G$ so
as to investigate $\iso_K ( w ).$ In
understanding the restricted representation, it
is helpful to investigate whether irreducible
$G$-representations $U_i$'s contained in $w$
satisfy the following condition on
irreducibility:
\begin{itemize}
\item[] Condition (I). $\res_K^G U_i$'s are all
irreducible.
\end{itemize}
For a while, we assume Condition (I). Under the
condition, our investigation of $\res_K^G w$ is
divided into two steps: first, we additionally
assume that
\[
\res_K^G U_i \text{'s are all isomorphic,}
\]
and then we get rid of the assumption later.

~

\subsection*{The first step: Condition (I) holds, and $\res_K^G U_i$'s are all
isomorphic.} ~

By the assumption of this step, $\res_K^G w$ is
decomposed into a direct sum of irreducible
$K$-representations as follows:
\[
\res_K^G w \cong (l_1+\cdots+l_m) \cdot \res_K^G U_1.
\]
By Schur's lemma, we have a homeomorphic group
isomorphism
\begin{equation} \label{equation: group iso for
subgroup} \iso_K ( w ) \cong \GL(l_1+\cdots+l_m,
\C) .
\end{equation}
And, the inclusion $\iso_G ( w ) \hookrightarrow
\iso_K ( w )$ is regarded as the injection
\begin{align} \label{equation: map between GL 1}
I_{l_1, \cdots, l_m} : \GL(l_1, \C) \times \cdots
\times \GL(l_m, \C)
& ~\longrightarrow~ \GL(l_1+\cdots+l_m, \C), \\
\notag (A_1, \cdots, A_m) & ~\longmapsto~ \left(
\begin{array}{c|c|c|c}
A_1         & \mathbf{0}  & \mathbf{0}  & \mathbf{0} \\
\hline
\mathbf{0}  & A_2         & \mathbf{0}  & \mathbf{0} \\
\hline
\mathbf{0}  & \mathbf{0}  & \ddots      & \mathbf{0} \\
\hline
\mathbf{0}  & \mathbf{0}  & \mathbf{0}  & A_m \\
\end{array}
\right)
\end{align}
through two group isomorphisms (\ref{equation:
group iso for G}), (\ref{equation: group iso for
subgroup}). That is, the following diagram
commutes:
\begin{equation} \label{diagram: for inclusion}
\begin{CD}
\iso_G ( w )  @>{\quad \mathrm{inclusion}
\quad}>>
\iso_K ( w )      \\
@V{\cong}VV
@VV{\cong}V   \\
\GL(l_1, \C) \times \cdots \times \GL(l_m, \C)
\quad @>{\quad \mathrm{I_{l_1, \cdots, l_m}}
\quad}>> \quad \GL(l_1+\cdots+l_m, \C).
\end{CD}
\end{equation}
By the diagram, we can prove the following lemma:

\begin{lemma} \label{lemma: first case}
For a complex $G$-representation $w$ and a closed
subgroup $K \subset G,$ assume that Condition (I)
holds, and that $\res_K^G U_i$'s are all
isomorphic. Then, the map (\ref{equation: induced
map})
\[
\pi_1 \Big( \iso_G (w) \Big) \longrightarrow
\pi_1 \Big( \iso_K (w) \Big)
\]
induced by the inclusion is surjective.
\end{lemma}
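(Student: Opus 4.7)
The plan is to reduce everything to the diagram (\ref{diagram: for inclusion}) and then just compute on determinants.

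First, since the vertical arrows in diagram (\ref{diagram: for inclusion}) are homeomorphic group isomorphisms, they induce isomorphisms on $\pi_1$, so it suffices to show that the block-diagonal embedding
\[
I_{l_1,\ldots,l_m}: \GL(l_1,\C)\times\cdots\times\GL(l_m,\C) \longrightarrow \GL(l_1+\cdots+l_m,\C)
\]
induces a surjection on fundamental groups. By the discussion just before the first step, $\pi_1(\GL(l,\C)) \cong \Z$ for every $l$, with the isomorphism realized by the determinant via $\pi_1(\det):\pi_1(\GL(l,\C))\to \pi_1(\C^*)\cong \Z$. Consequently $\pi_1$ of the domain is $\Z^m$ and $\pi_1$ of the codomain is $\Z$.

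Next I would compute the induced map on $\pi_1$ explicitly. For $(A_1,\ldots,A_m)$ in the product, the determinant of the block-diagonal matrix $I_{l_1,\ldots,l_m}(A_1,\ldots,A_m)$ equals $\det(A_1)\cdots\det(A_m)$. Thus the diagram
\[
\begin{CD}
\GL(l_1,\C)\times\cdots\times\GL(l_m,\C) @>{I_{l_1,\ldots,l_m}}>> \GL(l_1+\cdots+l_m,\C) \\
@V{\det\times\cdots\times\det}VV @VV{\det}V \\
\C^*\times\cdots\times\C^* @>{\mathrm{mult}}>> \C^*
\end{CD}
\]
commutes, where $\mathrm{mult}$ is coordinatewise multiplication. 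Passing to $\pi_1$ and identifying each $\pi_1(\C^*)$ with $\Z$, the bottom arrow becomes the summation map $\Z^m\to\Z$, $(n_1,\ldots,n_m)\mapsto n_1+\cdots+n_m$. Since the vertical arrows are isomorphisms, the induced map $\pi_1(I_{l_1,\ldots,l_m}):\Z^m\to\Z$ is also this summation.

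Finally, the summation $\Z^m\to\Z$ is obviously surjective (any standard generator maps to $1$), so $\pi_1(\iso_G(w))\to\pi_1(\iso_K(w))$ is surjective. I do not foresee a real obstacle here; the only thing to be careful about is bookkeeping the identification of $\pi_1$ of a product of general linear groups with $\Z^m$ so that the induced map is really the sum rather than some twisted version — but this is settled by the compatibility of the determinant with block-diagonal multiplication noted above.
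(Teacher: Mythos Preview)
Your argument is correct and follows essentially the same route as the paper: both reduce via diagram~(\ref{diagram: for inclusion}) to showing that $\pi_1(I_{l_1,\ldots,l_m})$ is surjective, and both rely on the determinant isomorphism $\pi_1(\det)$. The only cosmetic difference is that the paper exhibits a single explicit loop $\big(\mathrm{diag}(e^{2\pi t\imath},\Id_{l_1-1}),\Id_{l_2},\ldots,\Id_{l_m}\big)$ mapping to a generator, whereas you compute the full induced homomorphism $\Z^m\to\Z$ as the summation map; your version yields a little more information at no extra cost.
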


\begin{proof}
By the diagram (\ref{diagram: for inclusion}), we
only have to show that $\pi_1(I_{l_1, \cdots,
l_m})$ is surjective. We define the following
diagonal matrices:
\[
A_1(t) = \left(
\begin{array}{c|c}
e^{2 \pi t \imath}  & \mathbf{0}   \\
\hline
\mathbf{0}          & \Id_{l_1-1}   \\

\end{array}
\right) \in \GL(l_1, \C), \text{ } ~ A_2(t) =
\Id_{l_2}, \text{ } ~ \cdots  \text{ } ~ , \text{
} ~ A_m(t) = \Id_{l_m}
\]
for $t \in [0,1],$ where $\Id_l$ is the identity
matrix of size $l.$ Then, the loop
\[
I_{l_1, \cdots, l_m} \Big(A_1(t), ~ A_2(t), ~ \cdots,
~ A_m(t) \Big)
\]
for $t \in [0,1]$ becomes a generator in $\pi_1
\Big( \GL(l_1+\cdots+l_m, \C) \Big) \cong \Z.$
Therefore, we obtain a proof.
\end{proof}
~

\subsection*{The second step: Condition (I) holds, but
$\res_K^G U_i$'s are not necessarily isomorphic.} ~

We relabel the decomposition (\ref{equation: irr
decomposition}) of $w$ as follows:
\begin{equation} \label{equation: irr decomposition
refined} w \cong \bigoplus_{1 \le i \le a} \Bigg(
\bigoplus_{ 1 \le j_i \le b_i} l_{i,j_i} \cdot
U_{i,j_i} \Bigg)
\end{equation}
for some $a,$ $b_i,$ $l_{i, j_i} \in \N$ and
irreducible $G$-representations $U_{i,j_i}$ so
that
\begin{enumerate}
\item $i = i^\prime$ and $j_i = j_{i^\prime}^\prime ~$
if and only if $~ U_{i,j_i} \cong U_{i^\prime,
j_{i^\prime}^\prime},$
\item $i = i^\prime ~$ if and only if
$~ \res_K^G U_{i,j_i} \cong \res_K^G U_{i^\prime,
j_{i^\prime}^\prime}.$
\end{enumerate}
In the relabeled decomposition (\ref{equation:
irr decomposition refined}), if we put
\[
w_i := \bigoplus_{ 1 \le j_i \le b_i} l_{i,j_i} \cdot
U_{i,j_i},
\]
then $w \cong \oplus_i ~ w_i.$ And, $w_i$'s satisfy
the following:
\begin{itemize}
\item[(a)] for each $i,$ all irreducible representations contained in
$w_i$ satisfy the assumption of the first step,
\item[(b)] whenever $i \ne i^\prime,$ there is no irreducible $K$-representation
contained in both $\res_K^G w_i$ and $\res_K^G
w_{i^\prime}.$
\end{itemize}
By (a), we can apply the first step to each $w_i$
to obtain
\[
\iso_G ( w_i ) \cong \prod_{1 \le j_i \le b_i}
\GL(l_{i,j_i}, ~ \C) \quad \text{ and } \quad
\iso_K ( w_i ) \cong \GL \Bigg( \sum_{1 \le j_i
\le b_i} l_{i,j_i}, ~ \C \Bigg).
\]
Then by Schur's lemma and (b), we have
\begin{align}
\label{equation: group iso for G refined} \iso_G ( w )
& \cong \prod_{1 \le i \le a} \iso_G ( w_i ) \cong
\prod_{1 \le i \le a} \Bigg( \prod_{1 \le
j_i \le b_i} \GL(l_{i,j_i}, ~ \C) \Bigg) ~ \text{ and} \\
\label{equation: group iso for subgroup refined}
\iso_K ( w ) & \cong \prod_{1 \le i \le a} \iso_K
( w_i ) \cong \prod_{1 \le i \le a} \GL \Bigg(
\sum_{1 \le j_i \le b_i} l_{i,j_i}, ~ \C \Bigg).
\end{align}

Next, we would find out a matrix expression for
the inclusion $\iso_G ( w ) \hookrightarrow
\iso_K ( w )$ through (\ref{equation: group iso
for G refined}) and (\ref{equation: group iso for
subgroup refined}). Since (a) holds, the
inclusion $\iso_G ( w_i ) \hookrightarrow \iso_K
( w_i )$ is regarded as the injection
\begin{align*}
I_{l_{i,1}, \cdots, l_{i, b_i}} : \prod_{1 \le j_i \le
b_i} \GL(l_{i,j_i}, ~ \C)
& ~\longrightarrow~ \GL \Bigg( \sum_{1 \le j_i \le b_i} l_{i,j_i}, ~ \C \Bigg), \\
\notag \prod_{1 \le j_i \le b_i} A_{i,j_i} &
~\longmapsto~ \left(
\begin{array}{c|c|c|c}
A_{i,1}     & \mathbf{0}  & \mathbf{0}  & \mathbf{0} \\
\hline
\mathbf{0}  & A_{i,2}     & \mathbf{0}  & \mathbf{0} \\
\hline
\mathbf{0}  & \mathbf{0}  & \ddots      & \mathbf{0} \\
\hline
\mathbf{0}  & \mathbf{0}  & \mathbf{0}  & A_{i,b_i} \\
\end{array}
\right)
\end{align*}
by (\ref{diagram: for inclusion}). We can piece
together these injections so that the inclusion
$\iso_G ( w ) \hookrightarrow \iso_K ( w )$ is
regarded as the injection
\begin{align} \label{equation: map between GL 2}
\prod_{1 \le i \le a} \Bigg( \prod_{1 \le j_i \le b_i}
\GL(l_{i,j_i}, ~ \C) \Bigg) & \quad \hookrightarrow
\quad \prod_{1 \le i
\le a} \GL \Bigg( \sum_{1 \le j_i \le b_i} l_{i,j_i}, \C \Bigg), \\
\notag \prod_{1 \le i \le a} \Bigg( \prod_{1 \le
j_i \le b_i} A_{i,j_i} \Bigg) & \quad \mapsto
\quad \prod_{1 \le i \le a} I_{l_{i,1}, \cdots,
l_{i, b_i}} \big( A_{i,1}, \cdots, A_{i, b_i}
\big)
\end{align}
through two group isomorphisms (\ref{equation:
group iso for G refined}), (\ref{equation: group
iso for subgroup refined}). We give an example of
(\ref{equation: map between GL 2}).

\begin{example}
Assume that $a=2$ and $b_1=2,$ $b_2=3$ for the
relabeled decomoposition (\ref{equation: irr
decomposition refined}), i.e.
\[
w \cong \big( l_{1,1} \cdot U_{1,1} \oplus
l_{1,2} \cdot U_{1,2} \big) \oplus \big( l_{2,1}
\cdot U_{2,1} \oplus l_{2,2} \cdot U_{2,2} \oplus
l_{2,3} \cdot U_{2,3} \big).
\]
Then,
\begin{align*}
\iso_G ( w ) & \cong \GL(l_{1,1}, \C) \times
\GL(l_{1,2}, \C) \times \GL(l_{2,1}, \C) \times
\GL(l_{2,2}, \C) \times \GL(l_{2,3}, \C) ~ \text{ and} \\
\iso_K ( w ) & \cong \GL(l_{1,1}+l_{1,2}, \C)
\times \GL(l_{2,1}+l_{2,2}+l_{2,3}, \C)
\end{align*}
by Schur's lemma. And, the inclusion $\iso_G ( w
) \hookrightarrow \iso_K ( w )$ is regarded as
the injection
\begin{align*} \label{equation: map between GL 1}
\prod_{1 \le i \le a} \Bigg( \prod_{1 \le j_i \le
b_i} \GL(l_{i,j_i}, ~ \C) \Bigg) & ~
\hookrightarrow ~ \prod_{1 \le i
\le a} \GL \Bigg( \sum_{j_i} l_{i,j_i}, \C \Bigg), \\
\notag \Big( (A_{1,1}, A_{1,2}), (A_{2,1},
A_{2,2}, A_{2,3}) \Big) & ~ \mapsto ~ \left(
\left(
\begin{array}{c|c}
A_{1,1}     & \mathbf{0}   \\
\hline
\mathbf{0}  & A_{1,2}     \\
\end{array}
\right)_{,} \left(
\begin{array}{c|c|c}
A_{2,1}     & \mathbf{0}  & \mathbf{0}  \\
\hline
\mathbf{0}  & A_{2,2}     & \mathbf{0}  \\
\hline
\mathbf{0}  & \mathbf{0}  & A_{2,3}     \\
\end{array}
\right) \right)
\end{align*}
for $A_{i,j_i} \in \GL(l_{i,j_i}, \C).$ \qed
\end{example}

By the inclusion (\ref{equation: map between GL
2}), we can get rid of an assumption in Lemma
\ref{lemma: first case}.

\begin{lemma} \label{lemma: second case}
For a complex $G$-representation $w$ and a closed
subgroup $K$ of $G,$ assume that Condition (I)
holds. Then, the map (\ref{equation: induced
map})
\[
\pi_1 \Big( \iso_G (w) \Big) \longrightarrow
\pi_1 \Big( \iso_K (w) \Big)
\]
induced by the inclusion is surjective.
\end{lemma}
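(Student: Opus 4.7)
The plan is to reduce the statement to Lemma \ref{lemma: first case} by using the explicit matrix description of the inclusion provided by equation (\ref{equation: map between GL 2}). After relabeling as in (\ref{equation: irr decomposition refined}), the decomposition $w \cong \bigoplus_{1 \le i \le a} w_i$ has two key features: within each $w_i$ the restrictions $\res_K^G U_{i,j_i}$ are all isomorphic (so each $w_i$ falls under the first step), and across different indices the $\res_K^G w_i$ share no irreducible $K$-constituents. The second feature is what makes Schur's lemma produce the product decompositions (\ref{equation: group iso for G refined}) and (\ref{equation: group iso for subgroup refined}), and it is what forces the inclusion $\iso_G(w) \hookrightarrow \iso_K(w)$ to be the product over $i$ of the inclusions $\iso_G(w_i) \hookrightarrow \iso_K(w_i)$, as recorded in (\ref{equation: map between GL 2}).

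First I would set up the commutative square analogous to (\ref{diagram: for inclusion}) but now with the product inclusion on the bottom: the left vertical is the isomorphism (\ref{equation: group iso for G refined}), the right vertical is (\ref{equation: group iso for subgroup refined}), and the bottom horizontal is $\prod_i I_{l_{i,1},\ldots,l_{i,b_i}}$. Then, applying the functor $\pi_1$ and using the fact that $\pi_1$ commutes with finite direct products of topological groups, the induced map on $\pi_1$ is identified with
\[
\prod_{1 \le i \le a} \pi_1(I_{l_{i,1},\ldots,l_{i,b_i}}) : \prod_{1 \le i \le a}\!\!\!\prod_{1 \le j_i \le b_i}\!\!\! \pi_1\bigl(\GL(l_{i,j_i},\C)\bigr) \longrightarrow \prod_{1 \le i \le a} \pi_1\Bigl(\GL\bigl(\textstyle\sum_{j_i} l_{i,j_i},\C\bigr)\Bigr).
\]

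Finally I would invoke Lemma \ref{lemma: first case} applied to each pair $(w_i, K)$ to conclude that every factor $\pi_1(I_{l_{i,1},\ldots,l_{i,b_i}})$ is surjective. A finite product of surjective maps between products of groups is surjective coordinate by coordinate, so the map (\ref{equation: induced map}) is surjective, completing the proof.

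There is no real obstacle here: the content is packaged entirely in Lemma \ref{lemma: first case} and in condition (b), which guarantees that the inclusion genuinely splits as a product across the index $i$. The only thing to be careful about is keeping track of the bookkeeping between the two levels of product indexing $(i, j_i)$ so that the identifications of $\pi_1$ with a product of copies of $\Z$ are compatible on both sides of the square.
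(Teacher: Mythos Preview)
Your proposal is correct and follows essentially the same route as the paper: reduce to the product description (\ref{equation: map between GL 2}), apply $\pi_1$ factor by factor, and invoke Lemma \ref{lemma: first case} on each $w_i$ to get surjectivity of each $\pi_1(I_{l_{i,1},\ldots,l_{i,b_i}})$.
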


\begin{proof}
As in the proof of Lemma \ref{lemma: first case},
we only have to show that the $\pi_1$ map induced
by (\ref{equation: map between GL 2}) is
surjective. For this, it suffices to show that
$\pi_1(I_{l_{i,1}, \cdots, l_{i, b_i}})$ is
surjective for each $i.$ But, this holds by the
proof of Lemma \ref{lemma: first case}.
Therefore, we obtain a proof.
%For an arbitrary $1 \le i_0 \le a$ and $t \in [0,1],$
%put
%\[
%A_{i_0, 1}(t) = \left(
%\begin{array}{cccc}
%e^{2 \pi t \imath}  & 0  & 0       & 0 \\
%                 0  & 1  & 0       & 0 \\
%                 0  & 0  & \ddots  & 0 \\
%                 0  & 0  & 0       & 1 \\
%\end{array}
%\right)
%\]
%and $A_{i_0, j_i}(t) = \Id_{l_{i_0, j_i}}$ for
%$j_i \ne 1.$ Then, the loop
%\[
%I_{l_{i_0,1}, \cdots, l_{i_0, b_{i_0}}} \Big(
%A_{i_0,1}(t), \cdots, A_{i_0, b_{i_0}}(t) \Big)
%\]
%gives a generator of the fundamental group of the
%$i_0$-th factor $\GL \Bigg( \sum_{j_{i_0}} l_{i_0,
%j_0}, \C \Bigg)$
%%\[
%%\GL \Bigg( \sum_{j_{i_0}} l_{i_0, j}, \C \Bigg)
%%\]
%of the product group $\prod_{1 \le i \le a} \GL
%\Big( \sum_{j_i} l_{i,j_i}, \C \Big).$ Therefore,
%we obtain a proof.
\end{proof}

Now, we can prove Proposition \ref{proposition:
simply connected} and Corollary \ref{corollary:
simply connected}.

\begin{proof}[Proof of Proposition \ref{proposition: simply connected}]
By Corollary \ref{corollary: two extreme cases},
$\Psi_F (w)$ is homeomorphic to the quotient
space $\iso_{G_{\bar{x}}} (w) / \iso_G (w)$ for
any $\bar{x} \in \bar{\mathbf{x}}.$ So, we only
have to show that $\iso_{G_{\bar{x}}} (w) /
\iso_G (w)$ is simply connected. For this, it
suffices to show that the map
\[
\pi_1 \Big( \iso_G (w) \Big)
\overset{\pi_1(k)}{\longrightarrow} \pi_1 \Big(
\iso_{G_{\bar{x}}} (w) \Big)
\]
in the long exact sequence (\ref{equation: exact
sequence of fibration}) of homotopy groups is
surjective. However, this holds by Lemma
\ref{lemma: second case} because Condition (I)
holds by the assumption of the proposition.
Therefore, we obtain a proof.
\end{proof}

\begin{proof}[Proof of Corollary \ref{corollary: simply connected}]
If $G$ is abelian, complex irreducible
$G$-representations are all one-dimensional. So, their
restrictions to a closed subgroup are still
irreducible. So, the Condition (I) holds. Therefore,
we obtain a proof by Proposition \ref{proposition:
simply connected}.
\end{proof}

If Condition (I) does not hold, then the map
(\ref{equation: induced map}) is not necessarily
surjective. Though we do not indulge into such a
situation any more in the paper, interested
readers might need the following lemma to
investigate the map:

\begin{lemma} \label{lemma: no condition I}
For a complex irreducible $G$-representation $w$
and a closed subgroup $K$ of $G,$ assume that
$\res_K^G w$ is decomposed into a direct sum of
irreducible representations as follows:
\[
l_1 \cdot V_1 \oplus \cdots \oplus l_m \cdot V_m
\]
for some $l_i \in \N$ and irreducible
$K$-representations $V_i$ such that $V_i \not
\cong V_{i^\prime}$ whenever $i \ne i^\prime.$
Then, the map (\ref{equation: induced map})
induced by the inclusion is as follows:
\[
\pi_1 \big( \iso_G ( w ) \big) \cong \Z ~
\longrightarrow ~ \pi_1 \big( \iso_K ( w ) \big)
\cong \Z^m, \quad n ~ \longmapsto ~ ( ~ l_1 n, ~
\cdots, ~ l_m n ~ )
\]
for $n \in \Z$ up to sign.
\end{lemma}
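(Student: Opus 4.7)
Here is the plan. Since $w$ is irreducible as a $G$-representation, Schur's lemma gives $\iso_G(w) \cong \C^{*}$ via the scalar identification $\lambda \leftrightarrow \lambda\cdot\id_w$, so $\pi_1(\iso_G(w))\cong\Z$ via the determinant/winding-number map. On the $K$-side, Schur's lemma applied to the decomposition $\res_K^G w \cong l_1\cdot V_1\oplus\cdots\oplus l_m\cdot V_m$ gives the homeomorphic group isomorphism
\[
\iso_K(w) \;\cong\; \GL(l_1,\C)\times\cdots\times\GL(l_m,\C),
\]
so $\pi_1(\iso_K(w))\cong\Z^m$, one copy of $\Z$ for each factor, again via the determinant.

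The substance of the lemma is to track the inclusion $\iso_G(w)\hookrightarrow\iso_K(w)$ through these two Schur identifications. My plan is to fix $K$-isotypic decompositions $w=\bigoplus_i (M_i\otimes V_i)$ where $M_i$ is the multiplicity space of $V_i$ (with $\dim M_i=l_i$), so that the Schur isomorphism for $\iso_K(w)$ is exactly $\prod_i \GL(M_i)\hookrightarrow \iso_K(w)$, $(A_1,\ldots,A_m)\mapsto \bigoplus_i(A_i\otimes\id_{V_i})$. Under this identification, an element $\lambda\cdot\id_w\in\iso_G(w)$ restricts on each block $M_i\otimes V_i$ to $(\lambda\cdot\id_{M_i})\otimes\id_{V_i}$, so the inclusion becomes
\[
\C^{*} \;\longrightarrow\; \GL(l_1,\C)\times\cdots\times\GL(l_m,\C), \qquad \lambda \;\longmapsto\; (\lambda\cdot\Id_{l_1},\,\ldots,\,\lambda\cdot\Id_{l_m}).
\]

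To finish, I take the generator of $\pi_1(\iso_G(w))$ represented by the loop $\lambda(t)=e^{2\pi\imath t}$, $t\in[0,1]$, and compute its image coordinate-wise. Applying $\det$ to the $i$-th factor gives $\det(e^{2\pi\imath t}\Id_{l_i})=e^{2\pi\imath l_i t}$, which represents $l_i$ times the standard generator of $\pi_1(\C^{*})\cong\Z$. Hence on $\pi_1$ the map sends $1\mapsto(l_1,\ldots,l_m)$ and therefore $n\mapsto(l_1 n,\ldots,l_m n)$, as claimed. The "up to sign" caveat simply reflects the ambiguity in the choice of generator for each copy of $\Z$ (equivalently, orientation of the winding number on each $\GL(l_i,\C)$).

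The only real obstacle I anticipate is bookkeeping: making sure that the Schur identification on the $K$-side is the same one implicitly used when writing $\pi_1(\iso_K(w))\cong\Z^m$, so that "the $i$-th coordinate" genuinely corresponds to the $\GL(l_i,\C)$ factor. Once the inclusion is written in the explicit diagonal-of-scalars form above, the determinant computation is immediate and the rest of the argument is essentially the same kind of loop computation used in the proof of Lemma \ref{lemma: first case}.
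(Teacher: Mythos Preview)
Your proof is correct and follows essentially the same approach as the paper: identify $\iso_G(w)\cong\C^*$ and $\iso_K(w)\cong\prod_i\GL(l_i,\C)$ via Schur's lemma, note that the inclusion sends $\lambda$ to $(\lambda\cdot\Id_{l_1},\ldots,\lambda\cdot\Id_{l_m})$, and then track the generating loop $e^{2\pi\imath t}$ through each factor to read off the multiplicity $l_i$. Your use of the multiplicity-space description $w=\bigoplus_i M_i\otimes V_i$ and the determinant map makes the bookkeeping a bit more explicit, but the argument is the same.
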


\begin{proof}
By Schur's lemma, we have
\[
\iso_G ( w ) \cong \GL(1, \C) \quad \text{ and }
\quad \iso_K ( w ) \cong \GL(l_1, \C) \times
\cdots \times \GL(l_m, \C).
\]
So, we obtain $\pi_1 \big( \iso_G ( w ) \big)
\cong \Z$ and $\pi_1 \big( \iso_K ( w ) \big)
\cong \Z^m.$ Since the loop $e^{2 \pi t \imath}
\in \GL(1, \C) = \C^*$ for $t \in [0,1]$ gives a
generator of $\pi_1 \big( \iso_G ( w ) \big),$ it
suffices to find its image to obtain a proof. The
loop considered as contained in $\iso_K ( w )$ is
expressed as
\[
\big( e^{2 \pi t \imath} \cdot \Id_{l_1}, \cdots,
e^{2 \pi t \imath} \cdot \Id_{l_m} \big) \in
\GL(l_1, \C) \times \cdots \times \GL(l_m, \C)
\]
for $t \in [0,1].$ Since the loop $e^{2 \pi t \imath}
\cdot \Id_l \in \GL(l, \C)$ for $t \in [0,1]$ and $l
\in \N$ gives $\pm l$ times a generator of $\pi_1
\big( \GL(l, \C) \big),$ we obtain a proof.
\end{proof}

%By using Lemma \ref{lemma: no condition I}, we
%can obtain a useful result on the fundamental
%group of the set of equivariant pointwise
%clutching maps.
%
%\begin{corollary} \label{corollary: Z_2}
%Under the assumption of Theorem \ref{theorem:
%bijectivity with extensions}, assume that the
%$G$-action on $\bar{\mathbf{x}}$ is transitive.
%Given a complex $G$-representation $w \in \ext F$
%and a point $\bar{x} \in \bar{\mathbf{x}},$ if
%$\res_{G_{\bar{x}}}^G U$ for each irreducible
%$G$-representation $U$ contained in $w$ is
%isomorphic to $2 V$ for some irreducible
%representation $V,$ then $\pi_1(\Psi_F (w)) \cong
%\Z_2.$
%\end{corollary}
%
%\begin{proof}
%The proof is exactly the same with the proof of
%Proposition \ref{proposition: simply connected}
%except that every map between fundamental groups
%is multiplied by two by Lemma \ref{lemma: no
%condition I}. So, the map
%\[
%\pi_1 \Big( \iso_G (w) \Big)
%\overset{\pi_1(k)}{\longrightarrow} \pi_1 \Big(
%\iso_{G_{\bar{x}}} (w) \Big)
%\]
%in the long exact sequence (\ref{equation: exact
%sequence of fibration}) of homotopy is the
%multiplication by two. Therefore, $\pi_1(\Psi_F (w))
%\cong \Z_2$ by the long exact sequence.
%\end{proof}

\bigskip

\section{Lemmas on evaluation and restriction of an equivariant pointwise
clutching map} \label{section: lemmas}

To explain motivation for evaluation of an equivariant
pointwise clutching map by an example, we recall
Example \ref{example: tetrahedron}.

\begin{example}[Continued from Exercise
\ref{example: tetrahedron}] \label{example:
evaluation} For a vertex $x \in X,$ we consider
an arbitrary equivariant pointwise clutching map
$\psi$ with respect to $(\pi^* E)_{\pi^{-1}(x)}.$
Put $\bar{\mathbf{x}} := \pi^{-1}(x) = \{
\bar{x}_i | i \in \Z_3 \},$ see Figure
\ref{figure: evaluation}.
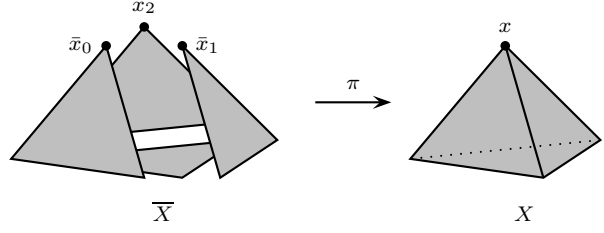
\begin{figure}[ht]
\begin{center}
\begin{pspicture}(-4,0)(5,3.5) \footnotesize

\pspolygon[fillstyle=solid,fillcolor=lightgray](1.5,
1)(3.25, 0.75)(4,1.25)(2.75,2.5)(1.5, 1)
\psline[linestyle=dotted](1.5, 1)(4,1.25)
\psline(3.25, 0.75)(2.75,2.5)

\pspolygon[fillstyle=solid,fillcolor=lightgray](-1.5,0.75)(-0.75,1.25)(-3.25,1)(-1.5,0.75)
\pspolygon[fillstyle=solid,fillcolor=lightgray](-0.75,1.5)(-2,2.75)(-3.25,1.25)(-0.75,1.5)
\pspolygon[fillstyle=solid,fillcolor=lightgray](-1,
0.75)(-0.25,1.25)(-1.5,2.5)(-1,0.75)
\pspolygon[fillstyle=solid,fillcolor=lightgray](-2,0.75)(-2.5,2.5)(-3.75,1)(-2,0.75)

\psline[arrowsize=5pt]{->}(0.25,1.75)(1.25,1.75)

\psdots(-2.5,2.5)(-2,2.75)(-1.5,2.5)(2.75,2.5)

%\psdots(2.85,2.15)

\uput[u](0.75,1.75){$\pi$} \uput[u](3,0){$X$}
\uput[u](-1.75,0){$\overline{X}$}

\uput[l](-2.5,2.5){$\bar{x}_0$}
\uput[u](-2,2.75){$\bar{x}_2$}
\uput[r](-1.5,2.5){$\bar{x}_1$}
\uput[u](2.75,2.5){$x$}
\end{pspicture}
\end{center}
\caption{\label{figure: evaluation} The preimage
of a vertex}
\end{figure}
Then, the map $\psi$ is determined by the
evaluation $\psi(\bar{x}_0, \bar{x}_1),$ i.e. the
following are determined by $\psi(\bar{x}_0,
\bar{x}_1)$:
\[
\psi(\bar{x}_i, \bar{x}_i), ~ \psi(\bar{x}_i,
\bar{x}_{i+1}), ~ \psi(\bar{x}_{i+1}, \bar{x}_i)
\quad \text{ for } i \in \Z_3.
\]
We give details on this. The isotropy subgroup
$\tetra_x$ at $x$ of the tetrahedral group
$\tetra$ is the dihedral group of order 6. Let
$a$ and $b$ be elements of $\tetra_x$ such that
\[
a \cdot \bar{x}_i = \bar{x}_{i+1} \quad \text{
and } \quad b \cdot \bar{x}_i = \bar{x}_{-i}
\]
for $i \in \Z_3.$ By reflexivity of pointwise
clutching map, we have
\[
\psi (\bar{x}_i, \bar{x}_i) = \id \qquad
\text{for any } i \in \Z_3.
\]
By equivariance of $\psi,$ we have
\begin{equation*}
\psi(\bar{x}_i, \bar{x}_{i+1}) ~ = ~ a^i ~ \psi(
\bar{x}_0, \bar{x}_1 ) ~ a^{-i} \qquad \text{for
any } i \in \Z_3.
\end{equation*}
By symmetry of pointwise clutching map, we have
\begin{align*}
\psi(\bar{x}_{i+1}, \bar{x}_i) &= \psi(\bar{x}_i,
\bar{x}_{i+1})^{-1} \\
&= \big( a^i ~ \psi(
\bar{x}_0, \bar{x}_1 ) ~ a^{-i} \big)^{-1} \\
&= a^i ~ \psi( \bar{x}_0, \bar{x}_1 )^{-1} ~
a^{-i}
\end{align*}
for any $i \in \Z_3.$ So, we have shown that
$\psi$ is determined by its evaluation at
$(\bar{x}_0, \bar{x}_1).$ \qed
\end{example}

We will prove a result generalizing Example
\ref{example: evaluation}. Prior to this, we
introduce some terminologies. A subset $B \subset
\bar{\mathbf{x}} \times \bar{\mathbf{x}}$ is
called a \textit{binary relation} on
$\bar{\mathbf{x}}.$ For binary relations $B,
B^\prime$ on $\bar{\mathbf{x}},$ we define the
following three operations:
\begin{align*}
B^{-1} &= \Big\{ (\bar{x}^\prime, \bar{x}) ~|~
(\bar{x}, \bar{x}^\prime)
\in B \Big\},  \\
B^\prime \circ B &= \Big\{ (\bar{x},
\bar{x}^{\prime\prime}) ~|~ \text{ there exists }
\bar{x}^\prime \text{ such that } (\bar{x},
\bar{x}^\prime) \in B \text{ and } (\bar{x}^\prime,
\bar{x}^{\prime\prime}) \in B^\prime \Big\}, ~ \text{and} \\
G \cdot B &= \Big\{ (g \bar{x}, g \bar{x}^\prime)
~|~ g \in G \text{ and } (\bar{x},
\bar{x}^\prime) \in B \Big\}.
\end{align*}
A binary relation $B$ on $\bar{\mathbf{x}}$ is
called an \textit{equivariant equivalence
relation} if the following four hold:
\begin{enumerate}
\item[(a)] reflexivity : $~\Delta \subset B, $
\item[(b)] symmetry : $~B^{-1} \subset B,$
\item[(c)] transitivity : $~B \circ B \subset B,$
\item[(d)] equivariance : $~G \cdot B \subset B,$
\end{enumerate}
where $\Delta$ is the diagonal subset of
$\bar{\mathbf{x}} \times \bar{\mathbf{x}}.$ Then,
there exists the smallest equivariant equivalent
relation containing $B$ for any binary relation
$B$ on $\bar{\mathbf{x}}.$ More precisely, if we
put $B^\prime = (G \cdot B) \cup (G \cdot
B^{-1}),$ then it can be checked that the
following binary relation on $\bar{\mathbf{x}}$
is the smallest equivariant equivalence relation
containing $B$:
\begin{equation*} %\label{equation: smallest}
\Delta ~\cup~ B^\prime ~\cup~ (B^\prime \circ
B^\prime) ~\cup~ (B^\prime \circ B^\prime \circ
B^\prime) ~\cup~ \cdots.
\end{equation*}
For a binary relation $B$ on $\bar{\mathbf{x}},$
we define the following evaluation map:
\begin{equation*}
\ev_B : \Psi_F \longrightarrow \prod_{(\bar{x},
\bar{x}^\prime) \in B} ~ \iso \big( F_{\bar{x}},
~ F_{\bar{x}^\prime} \big), \quad \psi
\longmapsto \prod_{(\bar{x}, \bar{x}^\prime) \in
B} \psi(\bar{x}, \bar{x}^\prime).
\end{equation*}

\begin{lemma}
\label{lemma: evaluation of ptwise clutching} For
a binary relation $B$ on $\bar{\mathbf{x}},$ if
the smallest equivariant equivalence relation
containing $B$ is equal to $\bar{\mathbf{x}}
\times \bar{\mathbf{x}},$ then the map $\ev_B$ is
homeomorphic to its image.
\end{lemma}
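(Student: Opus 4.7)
The map $\ev_B$ is continuous by the universal property of the product topology, so the task reduces to showing it is injective and that its set-theoretic inverse on the image $\ev_B(\Psi_F)$ is continuous. My plan is to exhibit, for each pair $(\bar{y}, \bar{y}^\prime) \in \bar{\mathbf{x}} \times \bar{\mathbf{x}}$, a formula recovering $\psi(\bar{y}, \bar{y}^\prime)$ from the coordinates $\{\psi(\bar{x}, \bar{x}^\prime) : (\bar{x}, \bar{x}^\prime) \in B\}$ of $\ev_B(\psi)$ by finitely many continuous operations on the relevant isomorphism spaces.

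The key is to propagate known values along the explicit presentation
$$\bar{\mathbf{x}} \times \bar{\mathbf{x}} \;=\; \Delta \;\cup\; B^\prime \;\cup\; (B^\prime \circ B^\prime) \;\cup\; \cdots, \qquad B^\prime = (G \cdot B) \cup (G \cdot B^{-1}),$$
of the smallest equivariant equivalence relation containing $B$, which exhausts $\bar{\mathbf{x}} \times \bar{\mathbf{x}}$ by hypothesis; since $\bar{\mathbf{x}}$ is finite, this ascending union stabilizes after finitely many steps. Starting from the given data on $B$, reflexivity yields $\psi = \id$ on $\Delta$; equivariance $\psi(g\bar{x}, g\bar{x}^\prime) = g\,\psi(\bar{x}, \bar{x}^\prime)\,g^{-1}$ pushes the data onto $G \cdot B$; symmetry $\psi(\bar{x}^\prime, \bar{x}) = \psi(\bar{x}, \bar{x}^\prime)^{-1}$ combined with equivariance gives the data on $G \cdot B^{-1}$ and hence on all of $B^\prime$; finally transitivity $\psi(\bar{x}, \bar{x}^{\prime\prime}) = \psi(\bar{x}^\prime, \bar{x}^{\prime\prime}) \, \psi(\bar{x}, \bar{x}^\prime)$ advances from each stage $(B^\prime)^{\circ k}$ to $(B^\prime)^{\circ (k+1)}$. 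Each of conjugation by a group element, inversion in $\iso(\cdot, \cdot)$, and composition is continuous, so the resulting expression for $\psi(\bar{y}, \bar{y}^\prime)$ is a finite continuous composition of coordinates of $\ev_B(\psi)$. This simultaneously gives injectivity of $\ev_B$ together with continuity of its inverse on the image.

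The only subtlety I anticipate is bookkeeping: a given pair $(\bar{y}, \bar{y}^\prime)$ may be reached by several distinct chains through the ascending union, yielding several candidate formulas. For the purpose of constructing the continuous inverse it suffices to fix, once and for all, one chain per pair; consistency on $\ev_B(\Psi_F)$ is then automatic because every $\psi \in \Psi_F$ satisfies the four axioms by definition. No analytical input beyond continuity of the $G$-action on $F$ and of the standard operations on $\iso(\cdot, \cdot)$ should be needed.
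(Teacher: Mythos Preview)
Your proposal is correct and follows essentially the same approach as the paper: propagate the values of $\psi$ from $B$ to all of $\bar{\mathbf{x}}\times\bar{\mathbf{x}}$ via reflexivity, equivariance, symmetry, and transitivity, and observe that each step (conjugation by a fixed group element, inversion, composition) is continuous. If anything, your treatment is more complete than the paper's, which carries out the continuity of $\ev_B^{-1}$ only in the illustrative special case of Example~\ref{example: evaluation} rather than writing out the general inductive recipe you describe.
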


\begin{proof}
First, we prove that $\ev_B$ is injective. The
map $\ev_B$ is injective if and only if any two
maps $\psi,$ $\psi^\prime \in \Psi_F$ identical
on $B$ are the same map. Hence to show that the
map $\ev_B$ is injective, we only have to show
that each $\psi$ is determined by $\psi(\bar{x},
\bar{x}^\prime)$'s for $(\bar{x}, \bar{x}^\prime
) \in B.$ However, this is basically the same
with the proof of Example \ref{example:
evaluation} because the smallest equivariant
equivalence relation containing $B$ is equal to
the whole set $\bar{\mathbf{x}} \times
\bar{\mathbf{x}}$ by the assumption. So, we
obtain a proof for injectivity.

Second, we show that $\ev_B$ is continuous. By
definition, the map $\ev_B$ is the composition of an
inclusion and an projection as follows:
\[
\Psi_F \hookrightarrow \prod_{(\bar{x},
\bar{x}^\prime) \in \bar{\mathbf{x}} \times
\bar{\mathbf{x}}} ~ \iso \big( F_{\bar{x}}, ~
F_{\bar{x}^\prime} \big) \longrightarrow
\prod_{(\bar{x}, \bar{x}^\prime) \in B} ~ \iso
\big( F_{\bar{x}}, ~ F_{\bar{x}^\prime} \big).
\]
Since the inclusion and the projection are continuous,
the map $\ev_B$ is continuous.

Last, we prove that $\ev_B^{-1}$ defined on the
image of $\ev_B$ is continuous. For simplicity,
we will give a proof only for the case in Example
\ref{example: evaluation}. In the case, the
binary relation $B = \{ (\bar{x}_0, \bar{x}_1)
\}$ satisfies the assumption of the lemma, and
the image of $\ev_B,$ denoted by $\im \ev_B,$ is
contained in $\iso ( F_{\bar{x}_0}, ~
F_{\bar{x}_1} ).$ We will express $\ev_B^{-1}$ as
the following composition of an inclusion and
some map $s$:
\[
\im \ev_B \hookrightarrow  \iso \big(
F_{\bar{x}_0}, ~ F_{\bar{x}_1} \big)
\overset{s}{\longrightarrow} \prod_{(\bar{x},
\bar{x}^\prime) \in \bar{\mathbf{x}} \times
\bar{\mathbf{x}}} ~ \iso \big( F_{\bar{x}}, ~
F_{\bar{x}^\prime} \big).
\]
Denoting $\iso ( F_{\bar{x}_i}, ~ F_{\bar{x}_j}
)$ by $\iso_{i, j},$ we define $s$ as follows:
\begin{align*}
\iso_{0, 1} ~ \rightarrow ~ & \iso_{0, 0} \times
\iso_{0, 1} \times \iso_{0, 2} \times \iso_{1, 0}
\times \iso_{1, 1} \times \iso_{1, 2} \times \iso_{2,
0} \times
\iso_{2, 1} \times \iso_{2, 2}, \\
A ~ \mapsto ~ & \Big( ~ \id ~ , ~ A ~ , ~ a^{-1}
A^{-1} a ~ , ~ A^{-1} ~ , ~ \id ~ , ~ a A a^{-1} ~ , ~
a^2 A a^{-2} ~ , ~ a A^{-1} a^{-1} ~ , ~ \id ~ \Big).
\end{align*}
This is nothing but piecing together formulas on $\psi
( \bar{x}_i, \bar{x}_i ),$ $\psi ( \bar{x}_i,
\bar{x}_{i+1} ),$ $\psi ( \bar{x}_{i+1}, \bar{x}_i )$
for $i \in \Z_3$ in Example \ref{example: evaluation},
and we can check that the above composition is equal
to the function $\ev_B^{-1}.$ So, $\ev_B^{-1}$ is
continuous because $s$ is continuous. Therefore, we
obtain a proof.
\end{proof}

We give some examples of this lemma.

\begin{example}
\begin{enumerate}
\item Assume that $\Z_m = \langle a \rangle$
acts on $\bar{\mathbf{x}} = \{ \bar{x}_i | i \in \Z_m
\}$ as follows:
\[
a \cdot \bar{x}_i = \bar{x}_{i+1} \quad \text{
for each }i \in \Z_m.
\]
Then, each $\psi \in \Psi_F$ is determined by
$\psi ( \bar{x}_0, \bar{x}_1 ).$
\item Assume that the order $2m$ dihedral group
\[
\D_m = \langle a, b ~ | ~ a^m = \id, ~ b^2 = \id,
~ b a b^{-1} = a^{-1} \rangle
\]
acts on $\bar{\mathbf{x}} = \{ \bar{x}_i | i \in
\Z_m \}$ as follows:
\[
a \cdot \bar{x}_i = \bar{x}_{i+1} \text{ and } b
\cdot \bar{x}_i = \bar{x}_{-i} \quad \text{ for
each }i \in \Z_m.
\]
Then, each $\psi \in \Psi_F$ is determined by
$\psi ( \bar{x}_0, \bar{x}_1 ).$
\item Assume that the order 4 dihedral group $\D_2$ acts on
$\bar{\mathbf{x}} = \{ \bar{x}_i | i \in \Z_4 \}$ as
follows:
\[
a \cdot \bar{x}_i = \bar{x}_{i+2} \text{ and } b
\cdot \bar{x}_i = \bar{x}_{-i+1} \quad \text{ for
each }i \in \Z_4.
\]
Then, each $\psi \in \Psi_F$ is determined by its
evaluation on
\[
B = \{ (\bar{x}_0, \bar{x}_1), ~ (\bar{x}_0,
\bar{x}_3) \}.
\]
\qed
\end{enumerate}
\end{example}

Next, we recall Example \ref{example: tetrahedron}
once again to explain motivation for restriction of an
equivariant pointwise clutching map by an example.

\begin{example}[Continued from Exercise
\ref{example: tetrahedron}] \label{example:
restriction} To glue the pullback vector bundle
$\pi^* E,$ we focused on how to glue $(\pi^*
E)_{\pi^{-1}(x)}$ for any point $x$ in an edge of
a regular tetrahedron $X.$ If we have an
understanding of all equivariant pointwise
clutching maps with respect to $(\pi^*
E)_{\pi^{-1}(x)}$ for each $x,$ then we could
pick one in each $\Psi_{(\pi^* E)_{\pi^{-1}(x)}}$
and collect them to form a continuous system of
equivariant pointwise clutching maps. But the
meaning of `continuous' is not clear at the
present stage because we can not compare two
equivariant pointwise clutching maps contained in
$\Psi_{(\pi^* E)_{\pi^{-1}(x)}}$'s for two
different $x$'s. Let us explain this.
\begin{figure}[ht]
\begin{center}
\begin{pspicture}(-4,0)(5,3.5) \footnotesize

\pspolygon[fillstyle=solid,fillcolor=lightgray](1.5,
1)(3.25, 0.75)(4,1.25)(2.75,2.5)(1.5, 1)
\psline[linestyle=dotted](1.5, 1)(4,1.25)
\psline(3.25, 0.75)(2.75,2.5)

\pspolygon[fillstyle=solid,fillcolor=lightgray](-1.5,0.75)(-0.75,1.25)(-3.25,1)(-1.5,0.75)
\pspolygon[fillstyle=solid,fillcolor=lightgray](-0.75,1.5)(-2,2.75)(-3.25,1.25)(-0.75,1.5)
\pspolygon[fillstyle=solid,fillcolor=lightgray](-1,
0.75)(-0.25,1.25)(-1.5,2.5)(-1,0.75)
\pspolygon[fillstyle=solid,fillcolor=lightgray](-2,0.75)(-2.5,2.5)(-3.75,1)(-2,0.75)

\psline[arrowsize=5pt]{->}(0.25,1.75)(1.25,1.75)

\psdots(-2.5,2.5)(-2,2.75)(-1.5,2.5)(2.75,2.5)

\psdots(2.85,2.15)(-2.4,2.15)(-1.4,2.15)

\uput[u](0.75,1.75){$\pi$} \uput[u](3,0){$X$}
\uput[u](-1.75,0){$\overline{X}$}

\uput[l](-2.5,2.5){$\bar{x}_0$}
\uput[u](-2,2.75){$\bar{x}_2$}
\uput[r](-1.5,2.5){$\bar{x}_1$}
\uput[u](2.75,2.5){$x$}

\uput[dl](2.95,2.15){$x^\prime$}
\uput[dl](-2.3,2.15){$\bar{x}_0^\prime$}
\uput[dl](-1.3,2.15){$\bar{x}_1^\prime$}
\end{pspicture}
\end{center}
\caption{\label{figure: restriction} Preimages of a
vertex and its nearby non-vertex point in an edge}
\end{figure}
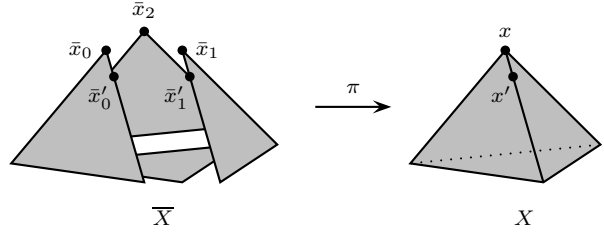
For two nearby points $x \ne x^\prime$ in an edge
of $X,$ let $\psi$ and $\psi^\prime$ be
equivariant pointwise clutching maps with respect
to $(\pi^* E)_{\pi^{-1}(x)}$ and $(\pi^*
E)_{\pi^{-1}(x^\prime)},$ respectively. If both
$x$ and $x^\prime$ are in the interior of the
edge, it would be conceivable to compare $\psi$
with $\psi^\prime$ because $(\pi^*
E)_{\pi^{-1}(x)}$ and $(\pi^*
E)_{\pi^{-1}(x^\prime)}$ are $G$-isomorphic and
we can identify them through a suitable
$G$-isomorphism. But if $x$ is a vertex and
$x^\prime$ is not, the situation is different
because cardinalities of their preimages under
$\pi$ are different, see Figure \ref{figure:
restriction}. To solve this problem, we restrict
the map $\psi$ to the smaller subset $\{
\bar{x}_0, \bar{x}_1 \} \times \{ \bar{x}_0,
\bar{x}_1 \}.$ Then, the restricted map will turn
out to be an equivariant pointwise clutching map
with respect to $(\pi^* E)_{\{ \bar{x}_0,
\bar{x}_1 \}}.$ And then we could compare the map
so obtained with $\psi^\prime$ because $(\pi^*
E)_{\{ \bar{x}_0, \bar{x}_1 \}}$ and $(\pi^*
E)_{\pi^{-1}(x^\prime)}$ are $G$-isomorphic and
we can identify them through a suitable
$G$-isomorphism. \qed
\end{example}

We prove a lemma on restriction of an equivariant
pointwise clutching map.

\begin{lemma}  \label{lemma: restricted pointwise clutching}
For a subset $\bar{\mathbf{x}}^\prime \subset
\bar{\mathbf{x}}$ and the maximal subgroup $K \subset
G$ preserving $\bar{\mathbf{x}}^\prime,$ the map
\begin{equation*}
\ev_{\bar{\mathbf{x}}^\prime \times
\bar{\mathbf{x}}^\prime} : \Psi_F \rightarrow
\Psi_{F_{\bar{\mathbf{x}}^\prime}}, \quad \psi \mapsto
\ev_{\bar{\mathbf{x}}^\prime \times
\bar{\mathbf{x}}^\prime} ( \psi )
\end{equation*}
is well-defined, and we have a $K$-isomorphism
\begin{equation*}
\res_K^G ~ F / \psi \quad \cong \quad
F_{\bar{\mathbf{x}}^\prime} \Big/
\ev_{\bar{\mathbf{x}}^\prime \times
\bar{\mathbf{x}}^\prime} ( \psi )
\end{equation*}
for any $\psi \in \Psi_F.$
\end{lemma}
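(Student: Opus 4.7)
I split the claim into two pieces and treat them in order. The first piece is that $\psi^\prime := \ev_{\bar{\mathbf{x}}^\prime \times \bar{\mathbf{x}}^\prime}(\psi)$ actually belongs to $\Psi_{F_{\bar{\mathbf{x}}^\prime}}$. Reflexivity, symmetry, and transitivity are pointwise conditions on tuples drawn from $\bar{\mathbf{x}}^\prime \times \bar{\mathbf{x}}^\prime$, so they are inherited from $\psi$ with no work. For $K$-equivariance, the key observation is that $K$ preserves $\bar{\mathbf{x}}^\prime$, so for $k \in K$ and $\bar{x}, \bar{x}^\prime \in \bar{\mathbf{x}}^\prime$ the pair $(k^{-1}\bar{x}, k^{-1}\bar{x}^\prime)$ again lies in $\bar{\mathbf{x}}^\prime \times \bar{\mathbf{x}}^\prime$. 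Then $(k \cdot \psi^\prime)(\bar{x}, \bar{x}^\prime) = k\,\psi(k^{-1}\bar{x}, k^{-1}\bar{x}^\prime)\,k^{-1} = \psi(\bar{x}, \bar{x}^\prime) = \psi^\prime(\bar{x}, \bar{x}^\prime)$, where the middle equality uses $G$-equivariance of $\psi$. Hence $\psi^\prime \in \Psi_{F_{\bar{\mathbf{x}}^\prime}}$ and the evaluation map is well-defined.

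For the $K$-isomorphism, I would invoke Lemma \ref{lemma: define clutching} applied to the triple $(K, \bar{\mathbf{x}}^\prime, F_{\bar{\mathbf{x}}^\prime})$ via the fiberwise isomorphism
\[
q := p_\psi\big|_{F_{\bar{\mathbf{x}}^\prime}} : F_{\bar{\mathbf{x}}^\prime} \longrightarrow \res_K^G (F/\psi).
\]
Here $q$ is $K$-equivariant because $p_\psi$ is $G$-equivariant, and a fiberwise isomorphism because $p_\psi$ is one; hence $q \in \fiso_K\bigl(F_{\bar{\mathbf{x}}^\prime}, \res_K^G (F/\psi)\bigr)$. In particular, Lemma \ref{lemma: become extension} forces $\res_K^G (F/\psi) \in \ext F_{\bar{\mathbf{x}}^\prime}$, so Lemma \ref{lemma: define clutching} produces a unique $\tilde\psi \in \Psi_{F_{\bar{\mathbf{x}}^\prime}}$ together with a $K$-isomorphism $F_{\bar{\mathbf{x}}^\prime}/\tilde\psi \cong \res_K^G (F/\psi)$ through which $q$ factors.

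The remaining step is to identify $\tilde\psi$ with $\psi^\prime$, after which the $K$-isomorphism above is exactly the one asserted in the lemma. This is immediate from formula (**) in the proof of Lemma \ref{lemma: define clutching}: specializing that formula to $q$ yields $\tilde\psi(\bar{x}, \bar{x}^\prime) = (p_\psi|_{F_{\bar{x}^\prime}})^{-1} \circ p_\psi|_{F_{\bar{x}}}$, while applying the same formula to the pair $(p_\psi, \psi)$ produces precisely the same expression for $\psi(\bar{x}, \bar{x}^\prime)$; restricting the indices to $\bar{\mathbf{x}}^\prime \times \bar{\mathbf{x}}^\prime$ gives $\tilde\psi = \psi^\prime$. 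I do not anticipate a substantive obstacle here; the only delicate point is to keep the two representation structures living on the same underlying vector space $F/\psi$ (namely the full $G$-structure and its restriction to $K$) notationally separate, but since one is obtained from the other by literal restriction this causes no real difficulty.
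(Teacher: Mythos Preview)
Your proposal is correct and follows essentially the same route as the paper: both arguments restrict the quotient map $p_\psi$ to $F_{\bar{\mathbf{x}}^\prime}$, apply Lemma \ref{lemma: define clutching} to this $K$-equivariant fiberwise isomorphism, and then identify the resulting clutching map with $\ev_{\bar{\mathbf{x}}^\prime \times \bar{\mathbf{x}}^\prime}(\psi)$ via formula (**). The only cosmetic difference is that you include a separate direct verification of $K$-equivariance of $\psi^\prime$ up front, whereas the paper lets that fall out of the Lemma \ref{lemma: define clutching} argument; this extra step is harmless but redundant.
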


\begin{proof}
Note that $F_{\bar{\mathbf{x}}^\prime}$ is a
$K$-vector bundle. For any $\psi \in \Psi_F,$ we
consider the map $p_\psi : F \rightarrow F/\psi.$
Restricting $p_\psi$ to
$F_{\bar{\mathbf{x}}^\prime},$ we obtain an
equivariant fiberwise isomorphism
\[
p_\psi|_{F_{\bar{\mathbf{x}}^\prime}} :
F_{\bar{\mathbf{x}}^\prime} \rightarrow \res_K^G
F/\psi.
\]
By Lemma \ref{lemma: define clutching}, we can
construct an equivariant pointwise clutching map
$\psi^\prime$ in
$\Psi_{F_{\bar{\mathbf{x}}^\prime}}$ such that
the following diagram commutes:
\begin{equation*}
\SelectTips{cm}{} \xymatrix{
F_{\bar{\mathbf{x}}^\prime}
\ar[r]^-{p_\psi|_{F_{\bar{\mathbf{x}}^\prime}}}
\ar[d]_-{p_{\psi^\prime}} & \qquad \res_K^G
F/\psi \\
F_{\bar{\mathbf{x}}^\prime} \big/ \psi^\prime \quad
\ar@{->}[ur]_-{\cong} }.
\end{equation*}
By definition of $\psi^\prime,$ the map
$\psi^\prime$ is equal to
$\ev_{\bar{\mathbf{x}}^\prime \times
\bar{\mathbf{x}}^\prime} ( \psi ),$ so the map
$\ev_{\bar{\mathbf{x}}^\prime \times
\bar{\mathbf{x}}^\prime} ( \psi )$ is contained
in $\Psi_{F_{\bar{\mathbf{x}}^\prime}}.$ And, the
map $\ev_{\bar{\mathbf{x}}^\prime \times
\bar{\mathbf{x}}^\prime}$ is well-defined. Also,
we obtain the $K$-isomorphism by the diagram.
Therefore, we obtain a proof.
\end{proof}

\end{document}